\documentclass[a4paper]{amsart}
\usepackage[english]{babel}
\usepackage{amsmath,amssymb,amsthm}
\usepackage{graphicx}
\usepackage{color}
\usepackage[active]{srcltx}

\parskip=1ex
\textwidth=16cm
\hoffset=-1.5cm
\voffset=-10mm

\theoremstyle{plain}
\newtheorem{theorem}{Theorem}[section]

\newtheorem{proposition}[theorem]{Proposition}
\newtheorem{lemma}[theorem]{Lemma}

\theoremstyle{definition}

\newtheorem{definition}[theorem]{Definition}

 \DeclareMathOperator{\dist}{dist\,}
 
 \DeclareMathOperator{\supp}{supp}
\DeclareMathOperator{\osc}{Osc}

\newcommand{\C}{\mathbb{C}}

\newcommand{\N}{\mathbb{N}}

\newcommand{\eps}{\varepsilon}

\renewcommand{\leq}{\leqslant}
\renewcommand{\geq}{\geqslant}
\renewcommand{\le}{\leqslant}
\renewcommand{\ge}{\geqslant}

\begin{document}

\title[Bishop-Phelps-Bollob\'{a}s property in spaces of continuous
functions]{The Bishop-Phelps-Bollob\'{a}s property for
operators between spaces of continuous functions}

\author[Acosta]{Mar\'ia D. Acosta}
\author[Becerra]{Julio Becerra-Guerrero}
\author[Choi]{Yun Sung Choi}
\author[Ciesielski]{Maciej Ciesielski}
\author[Kim]{Sun Kwang Kim}
\author[Lee]{Han Ju Lee}
\author[Louren\c{c}o]{Mary Lilian Louren\c{c}o}
\author[Mart\'{\i}n]{Miguel Mart\'{\i}n}

\address[Acosta, Becerra and Mart\'{\i}n]{Universidad de Granada, Facultad de Ciencias.
Departamento de An\'{a}lisis Matem\'{a}tico, 18071 Granada (Spain)}
\email{\texttt{dacosta@ugr.es}, \texttt{juliobg@ugr.es}, \texttt{mmartins@ugr.es} }

\address[Choi]{Department of Mathematics, POSTECH, Pohang (790-784), Republic of Korea}
\email{\texttt{mathchoi@postech.ac.kr}}

\address[Ciesielski]{Pozna\'{n} University of Tech\-no\-lo\-gy, Piotrowo 3A, 60-965
Pozna\'{n}, Poland}
\email{\texttt{maciej.ciesielski@put.poznan.pl}}

\address[Kim]{School of Mathematics, Korea Institute for Advanced Study (KIAS),
 85 Hoegiro, Dongdaemun-gu, Seoul 130-722, Republic of Korea}
\email{\texttt{lineksk@gmail.com}}

\address[Lee]{Department of Mathematics Education,
Dongguk University - Seoul, 100-715 Seoul, Republic of Korea}
\email{\texttt{hanjulee@dongguk.edu}}

\address[Louren\c{c}o]{Universidade de Sao Paulo, Instituto de Matem\'atica e
Estat\'\i stica, CP 66281 CEP:05311-970  S\~{a}o Paulo, Brazil}
\email{\texttt{mllouren@ime.usp.br}}

\subjclass[2010]{Primary 46B04}

\thanks{First and eighth authors partially supported by Spanish MINECO and FEDER
project no.\ MTM2012-31755, Junta de Andaluc\'{\i}a and FEDER grants FQM-185 and
P09-FQM-4911. Second author supported by  MTM2011-23843 and Junta de Andaluc\'ia
grants FQM 0199 and FQM 1215. Third author partially supported by Basic Science
Research Program through the National Research Foundation of Korea (NRF) funded by
the Ministry of Education, Science and Technology (No. 2010-0008543), and also by
Priority Research Centers Program through the National Research Foundation of Korea
(NRF) funded by the Ministry of Education, Science and Technology (MEST) (No.
2012047640). Sixth  author partially supported by Basic Science Research Program
through the National Research Foundation of Korea(NRF) funded by the Ministry of
Education, Science and Technology (2012R1A1A1006869). Seventh  author partially
supported by FAPESP (Project no  2012/01015-9).}

\large

\begin{abstract}
We show that the space of bounded  linear operators between  spaces of continuous
functions on compact Hausdorff topological spaces has the
Bishop-Phelps-Bollob\'{a}s property. A similar result is also proved for the class
of compact operators from the space of continuous functions vanishing at infinity
on a locally compact and Hausdorff topological space into a uniformly convex space,
and for the class of compact operators from a Banach space into a predual of an
$L_1$-space.
\end{abstract}

\date{July 17th, 2013}

\maketitle

\section{Introduction}
E.~Bishop and R.~Phelps proved in 1961 \cite{BP} that every
(continuous linear) functional $x^*$ on a Banach space $X$ can be approximated by a
norm attaining functional $y^*$. This result is called  the Bishop-Phelps Theorem.
Shortly thereafter, B.~Bollob\'{a}s \cite{Bol} showed that this approximation can
be done in such a way that, moreover, the point at which $x^*$ almost attains its
norm is close in norm to a point at which $y^*$ attains its norm. This is a
quantitative version of the Bishop-Phelps Theorem, known as the
Bishop-Phelps-Bollob\'{a}s Theorem.

For a real or complex Banach space $X$, we denote by $S_X$, $B_X$ and $X^*$ the
unit sphere, the closed unit ball and the dual space of $X$, respectively.

\begin{theorem}[{Bishop-Phelps-Bollob\'{a}s Theorem, \cite[Theorem~1]{Bol}}]
Let  $X$ be a Banach space and $0 < \varepsilon <1/2$. Given  $x \in B_X$ and $x^* \in
S_{X^*}$  with $\vert 1 - x^*(x ) \vert < \dfrac{\varepsilon^2}{2}$, there are elements
$y\in S_X $ and $ y^* \in S_{X^*}$ such that $y^* (y)=1$,  $ \Vert y-x \Vert <
\varepsilon + \varepsilon^2$  and $ \Vert y^* -x^* \Vert < \varepsilon $.
\end{theorem}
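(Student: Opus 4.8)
The plan is to establish the real-scalar case and transfer. If $\K=\C$, put $g:=\re x^*$, a norm-one $\R$-linear functional on $X$ viewed as a real space; since $\re x^*(x)\geq 1-\abs{1-x^*(x)}>1-\varepsilon^2/2$, the real result provides $y\in S_X$ and an $\R$-linear $h$ with $h(y)=1=\norm{h}$, $\norm{y-x}<\varepsilon+\varepsilon^2$ and $\norm{h-g}<\varepsilon$. Then $y^*(z):=h(z)-ih(iz)$ is $\C$-linear with $\re y^*=h$, so $\norm{y^*}=\norm{h}=1$, $y^*(y)=1$ (its real part equals $1$ while $\abs{y^*(y)}\leq1$), and $\norm{y^*-x^*}=\norm{\re(y^*-x^*)}=\norm{h-g}<\varepsilon$. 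So I may assume $X$ real from now on.

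The heart of the matter is a quantitative Bishop--Phelps support-point argument. Fix $\gamma:=\dfrac{\varepsilon}{2(1+\varepsilon)}\in(0,\tfrac12)$ and let $K:=\{z\in X:\gamma\norm{z}\leq x^*(z)\}$, a closed convex cone with $\operatorname{int}K\neq\emptyset$ (because $\norm{x^*}=1>\gamma$) and $0\notin\operatorname{int}K$. On $B_X$ define $u\preceq v\iff v-u\in K$; this is a partial order, and since $\norm{v-u}\leq\gamma^{-1}\bigl(x^*(v)-x^*(u)\bigr)$ whenever $u\preceq v$ while $x^*$ is bounded above on $B_X$, every $\preceq$-chain is Cauchy, hence convergent, with limit an upper bound (using that $K$ and $B_X$ are closed). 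Zorn's lemma then gives a $\preceq$-maximal $y$ in $\{z\in B_X:x\preceq z\}$. From $x\preceq y$ one gets $\gamma\norm{y-x}\leq x^*(y)-x^*(x)\leq 1-x^*(x)<\varepsilon^2/2$, so $\norm{y-x}<\varepsilon^2/(2\gamma)=\varepsilon+\varepsilon^2$; and $\norm{y}=1$, since if $\norm{y}<1$ one picks $u\in S_X$ with $x^*(u)>\gamma$ and checks that $y+tu\in B_X$ with $y\prec y+tu$ for small $t>0$, contradicting maximality.

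Next I would produce $y^*$ by separation. Maximality of $y$ says $(y+K)\cap B_X=\{y\}$; as $\operatorname{int}K\neq\emptyset$ and $0\notin\operatorname{int}K$, the nonempty open convex set $y+\operatorname{int}K$ misses $B_X$, so Hahn--Banach separation yields a nonzero $y^*\in X^*$ with $\sup_{B_X}y^*\leq\inf_{y+K}y^*$; evaluating at $y\in B_X\cap(y+K)$ forces equality, and after normalising we obtain $\norm{y^*}=1$, $y^*(y)=\sup_{B_X}y^*=1$, and $y^*(k)\geq0$ for all $k\in K$, i.e. $y^*$ lies in the dual cone $K^{\circ}$. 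A standard computation (its pre-polar is exactly $K$, and $K^{\circ}$ is weak-$*$ closed, being the cone generated by the weak-$*$ compact convex set $x^*+\gamma B_{X^*}$, which avoids $0$) identifies
\[
K^{\circ}=\bigl\{\lambda x^*+\psi:\ \lambda\geq0,\ \norm{\psi}\leq\gamma\lambda\bigr\}.
\]
Writing $y^*=\lambda x^*+\psi$ as in this description, $\norm{y^*}=1$ forces $\tfrac1{1+\gamma}\leq\lambda\leq\tfrac1{1-\gamma}$, hence
\[
\norm{y^*-x^*}\leq\abs{\lambda-1}+\norm{\psi}\leq\frac{\gamma}{1-\gamma}+\frac{\gamma}{1-\gamma}=\frac{2\gamma}{1-\gamma}=\frac{2\varepsilon}{2+\varepsilon}<\varepsilon,
\]
which finishes the argument.

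The step I expect to be the main obstacle is obtaining $y^*$ with quantitative control on $\norm{y^*-x^*}$: plain Bishop--Phelps only produces a support point, and one must run the maximality argument with the widened cone $K$ of aperture $\gamma$ and then pin down its dual cone to see that the supporting functional stays $\varepsilon$-close to $x^*$. The rest is routine bookkeeping with the choice $\gamma=\varepsilon/\bigl(2(1+\varepsilon)\bigr)$, designed so that $\varepsilon^2/(2\gamma)\leq\varepsilon+\varepsilon^2$ and $2\gamma/(1-\gamma)<\varepsilon$ hold simultaneously.
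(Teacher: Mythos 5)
Your proof is correct, but note that the paper does not prove this statement at all: it is quoted verbatim from Bollob\'as's paper \cite{Bol} as a known classical result, so there is no internal argument to compare against. What you supply is the standard ``Phelps cone'' proof: the complex-to-real reduction is routine and correct; the cone $K=\{z:\gamma\norm{z}\le x^*(z)\}$ with aperture $\gamma=\eps/(2(1+\eps))$, the Zorn/Cauchy-chain argument producing a $\preceq$-maximal $y\in S_X$ with $\gamma\norm{y-x}\le 1-x^*(x)<\eps^2/2$, the separation of $y+\operatorname{int}K$ from $B_X$, and the identification of the dual cone as $\{\lambda x^*+\psi:\lambda\ge 0,\ \norm{\psi}\le\gamma\lambda\}$ (via the bipolar theorem, using that the cone generated by the weak-$*$ compact convex set $x^*+\gamma B_{X^*}$, which misses the origin, is weak-$*$ closed --- Krein--\v{S}mulian makes this painless) all check out, and the arithmetic $\eps^2/(2\gamma)=\eps+\eps^2$ and $2\gamma/(1-\gamma)=2\eps/(2+\eps)<\eps$ delivers exactly the constants in the statement. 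This differs in spirit from Bollob\'as's original two-page argument, which does not rerun the variational construction but instead deduces the result from the Bishop--Phelps theorem as a black box together with an elementary lemma relating $\norm{y^*-x^*}$ to $\sup\{|(y^*-x^*)(z)|:z\in B_X,\ x^*(z)\ \text{large}\}$; your route is longer but self-contained, makes the role of the aperture $\gamma$ transparent, and is the one that generalizes (it is essentially the Br\o ndsted--Rockafellar principle specialized to indicator functions). The only point worth flagging is that the dual-cone identification, which you label ``a standard computation,'' is genuinely the crux of the quantitative estimate and deserves the Krein--\v{S}mulian justification spelled out above rather than being waved through.
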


We refer the reader to the recent paper \cite{CKMMR} for a more accurate version of the above
theorem.

In 2008, M.D. Acosta, R.M. Aron, D. Garc\'{\i}a and M. Maestre introduced the
so-called Bishop-Phelps-Bollob\'{a}s property for operators \cite[Definition
1.1]{AAGM}. For our purposes, it will be useful to recall  an appropriate version
of this property for classes of operators defined  in \cite[Definition 1.3]{ABGKM}.
Given two Banach spaces $X$ and $Y$, $\mathcal{L}(X,Y)$ denotes the space of all
(bounded and linear) operators from $X$ into $Y$. The subspace of
$\mathcal{L}(X,Y)$ of {\it finite-rank operators} $\mathcal{F}(X,Y) $;
$\mathcal{K}(X,Y)$ will denote the subspace of all {\it compact operators}.

\begin{definition}
\label{def-BPBP-operadores}
Let $X$ and $Y$ be Banach spaces and $\mathcal{M}$ a linear subspace of
$\mathcal{L}(X,Y)$. We say that $\mathcal{M}$ satisfies the {\em
Bishop-Phelps-Bollob\'{a}s property} if given $\varepsilon > 0$, there is $ \eta
(\varepsilon)  > 0$   such that whenever   $T \in S_{\mathcal{M}}$ and $x_0 \in
S_{X}$  satisfy that $\Vert  T x_0 \Vert > 1- \eta (\varepsilon )$, then there
exist  a point   $u_0 \in S_{X}$ and an  operator   $S \in S_{\mathcal{M}}$
satisfying the following conditions:
$$
\Vert  Su_0 \Vert  =1, \quad  \Vert u_0- x_0 \Vert < \varepsilon, \quad \text{and} \quad \Vert
S-T \Vert < \varepsilon.
$$
In  case that $\mathcal{M}=\mathcal{L}(X,Y)$ satisfies the previous property it is
said that the pair $(X,Y)$ has the {\it  Bishop-Phelps-Bollob\'{a}s property for
operators} (shortly
\emph{BPBp for operators}).
\end{definition}

Observe that the BPBp of a pair $(X,Y)$ means that one is able to approximate any pair of an
operator and a point at which the operator almost attains its  norm by a new pair of a
norm-attaining operator and a point at which this  new operator attains its norm. In particular,
if a pair $(X,Y)$ has the BPBp, the set of norm-attaining operators is dense in
$\mathcal{L}(X,Y)$. The reverse result is far from being true: there are Banach spaces $Y$ such
that the pair $(\ell_1^2,Y)$ does not have the BPBp (see \cite{AAGM}).

In \cite{AAGM} the authors provided the first version of the
Bishop-Phelps-Bollob\'{a}s Theorem for operators.  Amongst them, a sufficient
condition on a Banach space $Y$ to get that for every Banach space $X$, the pair
$(X,Y)$ has the BPBp for operators, which is satisfied, for instance, by $Y=c_0$ or
$Y=\ell_\infty$. A characterization of the Banach spaces $Y$ such that the pair
$(\ell_1,Y)$ has the BPBp for operators is also given. There are also positive
results for operators from $L_1 (\mu) $ into $L_\infty(\nu)$ \cite{ACGM,CKLM}, for
operators from $L_1(\mu)$ into $L_1(\nu)$ \cite{CKLM}, for certain ideals of
operators from $L_1 (\mu) $ into another Banach space \cite{CK,ABGKM}, for
operators from an Asplund space into $C_0(L)$ or into a uniform algebra
\cite{ACK,CGK}, and for operators from a uniformly convex space into an arbitrary
Banach space \cite{ABGM, KL}. For some more recent results, see also \cite{ACKLM}.
Let us also point out that the set of norm attaining operators from $L_1[0,1]$ into
$C[0,1]$ is not dense in $\mathcal{L}(L_1[0,1], C[0,1])$ \cite{SchaP}.

Our aim in this paper is to provide classes of Banach spaces satisfying a version
of the Bishop-Phelps-Bollob\'{a}s Theorem for operators. The first result, which is
the content of section~\ref{sec:C(K)-C(S)}, states that given arbitrary compact
Hausdorff topological spaces $K$ and $S$, the pair $(C(K), C(S))$ satisfies the
BPBp for operators in the real case. This result extends the one by J.~Johnson and
J.~Wolfe \cite{JoWo} that the set of norm attaining operators from $C(K)$ into
$C(S)$ is dense in $\mathcal{L}(C(K), C(S))$. In
section~\ref{sec:c(K)toUniformlyconvex}, we prove that the space
$\mathcal{K}(C_0(L),Y)$ satisfies the Bishop-Phelps-Bollob\'{a}s property whenever
$L$ is a locally compact Hausdorff topological space and $Y$ is uniformly convex in
both the real and the complex case. Let us remark that it was also proved in
\cite{JoWo} that the set of norm-attaining weakly compact operators from $C(K)$
into $Y$ is dense in the space of all weakly compact operators. But, as commented
above, there are Banach spaces $Y$ such that the pair $(\ell_\infty^2,Y)$ does not
satisfy the BPBp for operators (in the real case, $\ell_\infty^2\equiv \ell_1^2$),
so some assumption on $Y$ is needed to get the Bishop-Phelps-Bollob\'{a}s property.
Finally, we devote section~\ref{sec:intoPredual_of_L_1} to show that the space
$\mathcal{K}(X,Y)$ has the Bishop-Phelps-Bollob\'{a}s property when $X$ is an
arbitrary Banach space and $Y$ is a predual of an $L_1$-space in both the real and
the complex case. This extends the result of \cite{JoWo} that the set of
norm-attaining finite-rank operators from an arbitrary Banach space into a predual
of an $L_1$-space is dense in the space of compact operators. In particular, for
$Y=C_0(L)$ for some locally compact Hausdorff topological space $L$, the result is
a consequence of the already cited paper \cite{ACK}.

\section{Operators between spaces of continuous functions}
\label{sec:C(K)-C(S)}
Throughout this section, $K$ and $S$ are compact Hausdorff
topological spaces.
Here $C(K)$ is  the space of real valued
continuous functions on $K$. $M(K)$ denotes the space of regular
Borel finite measures on $K$, which identifies with the dual of $C(K)$ by the Riesz
representation theorem. For $s\in S$, we write $\delta_s$ to denote the point
measure concentrated at $s$.

\begin{lemma}[\mbox{\cite[Theorem 1, p.~490]{DuSch}}]
\label{lem-isometric-isomorphism}
Let $X$ be a Banach space and let $S$ be a compact Hausdorff topological space. Given an operator
$A:X\longrightarrow{C(S)}$, define $\mu:S\longrightarrow{X^*}$ by $\mu(s)=A^*(\delta_s)$ for
every $s\in S$. Then the relationship
$$
[Ax](s)= \mu(s)(x), \ \ \  \forall x\in X,\, s \in S
$$
defines an isometric isomorphism between $\mathcal{L}(X,C(S))$ and the space of
$w^*$-continuous functions from $S$ to $X^*$, endowed  with the supremum norm,
i.e.\ $\Vert{\mu}\Vert=\sup\{\Vert{\mu(s)}\Vert:s\in{S}\}$.
Compact operators correspond to norm continuous
functions.
\end{lemma}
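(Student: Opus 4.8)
The relation $[Ax](s)=\mu(s)(x)$ needs no proof: by definition of the adjoint, $\mu(s)(x)=(A^*\delta_s)(x)=\delta_s(Ax)=[Ax](s)$. So the actual content is that $A\mapsto\mu$ is a \emph{linear isometry} of $\mathcal{L}(X,C(S))$ \emph{onto} the space $W$ of $w^*$-continuous maps $S\to X^*$ equipped with the supremum norm, together with the identification of the compact operators inside this picture. The plan is to base everything on the single identity
$$
\norm{\mu(s)-\mu(t)}=\sup_{\norm{x}\le1}\bigl|[Ax](s)-[Ax](t)\bigr|=\sup_{g\in A(B_X)}\bigl|g(s)-g(t)\bigr|\qquad(s,t\in S),
$$
which is immediate from $\mu(s)-\mu(t)=A^*(\delta_s-\delta_t)$ and the definition of the dual norm.

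First I would check well-definedness, linearity and the isometry. Linearity of $A\mapsto\mu$ is clear. For each fixed $x$ the map $s\mapsto\mu(s)(x)=[Ax](s)$ is continuous, hence $\mu\in W$; exchanging the two suprema gives $\norm{A}=\sup_{\norm{x}\le1}\sup_{s\in S}|[Ax](s)|=\sup_{s\in S}\norm{\mu(s)}=\norm{\mu}$, so the correspondence is isometric and in particular injective. For surjectivity I would start from $\mu\in W$: for each $x$ the scalar function $s\mapsto\mu(s)(x)$ is continuous on the compact space $S$, hence bounded, so the uniform boundedness principle yields $\norm{\mu}=\sup_{s\in S}\norm{\mu(s)}<\infty$; then $Ax:=\mu(\cdot)(x)$ defines an element of $C(S)$ by $w^*$-continuity of $\mu$, $A$ is linear, $\norm{Ax}_\infty\le\norm{\mu}\,\norm{x}$, and $(A^*\delta_s)(x)=[Ax](s)=\mu(s)(x)$ shows that $A\mapsto\mu$ recovers the given $\mu$.

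Next I would treat the compact operators. By the displayed identity, $\mu$ is norm-continuous on $S$ if and only if the family $A(B_X)\subseteq C(S)$ is equicontinuous, because for a neighbourhood $V$ of a point $t_0\in S$ one has $\sup_{s\in V}\norm{\mu(s)-\mu(t_0)}\le\eps$ exactly when $\sup_{g\in A(B_X)}\sup_{s\in V}|g(s)-g(t_0)|\le\eps$. Since $A(B_X)$ is norm-bounded, the Arzel\`{a}--Ascoli theorem (valid for an arbitrary compact Hausdorff $S$) tells us that it is relatively norm-compact precisely when it is equicontinuous. Combining the two equivalences, $A$ is compact if and only if $\mu$ is norm-continuous.

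I do not expect a genuine obstacle here; this is a classical statement (essentially the cited result of Dunford--Schwartz). The only points deserving a moment's attention are the invocation of Arzel\`{a}--Ascoli in the possibly non-metrizable setting and the observation that ``$A(B_X)$ equicontinuous'' and ``$\mu$ norm-continuous'' are literally the same condition rephrased through the displayed norm identity; everything else is routine bookkeeping with adjoints and suprema.
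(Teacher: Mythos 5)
Your proof is correct and is essentially the classical argument behind the cited result of Dunford--Schwartz; the paper itself offers no proof, only the reference. The norm identity $\norm{\mu(s)-\mu(t)}=\sup_{g\in A(B_X)}|g(s)-g(t)|$, the uniform boundedness step for surjectivity, and the Arzel\`a--Ascoli characterization of relatively compact subsets of $C(S)$ (valid for compact Hausdorff $S$) are exactly the right ingredients, so there is nothing to add.
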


\begin{lemma}[\mbox{\cite[Lemma 2.2]{JoWo}}]
\label{lem-w*lower-semicontinuous}
Let $\mu:S\longrightarrow{M(K)}$ be $w^*$-continuous. Let $\varepsilon>0$, $s_0\in{S}$ and an
open subset $V$ of $K$ be given. Then there exists an open neighborhood $U$ of $s_0$ such that if
$s\in{U}$, then $|\mu(s)|(V)\geq|\mu(s_0)|(V)-\varepsilon$.
\end{lemma}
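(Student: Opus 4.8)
The plan is to read this as a lower–semicontinuity statement for the scalar function $s\mapsto|\mu(s)|(V)$ at the point $s_0$. The mechanism is to manufacture a single fixed function $f\in C(K)$ supported inside $V$ whose integral against $\mu(s_0)$ almost realises $|\mu(s_0)|(V)$, and then to propagate the resulting inequality to nearby $s$ by invoking the $w^*$-continuity of $\mu$, which is precisely the continuity of $s\mapsto\int_K f\,d\mu(s)$.

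\emph{Producing the test function.} First I would find $f\in C(K)$ with $\|f\|_\infty\le 1$ and $\supp f$ a compact subset of $V$ such that
$$
\int_K f\,d\mu(s_0)>|\mu(s_0)|(V)-\frac{\eps}{2}.
$$
Writing the Jordan decomposition $\mu(s_0)=\nu^+-\nu^-$, both $\nu^\pm$ are regular finite positive measures, so by inner regularity one can choose \emph{disjoint} compact sets $C^+,C^-\subseteq V$ with $\nu^+(C^+)>\nu^+(V)-\eps/8$ and $\nu^-(C^-)>\nu^-(V)-\eps/8$ (taking $C^+$ inside the positive set of a Hahn decomposition and $C^-$ inside the negative one). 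Since $K$ is normal, Urysohn's lemma (together with a cutoff supported in a neighbourhood of $C^+\cup C^-$ with closure inside $V$) provides $f\in C(K)$ with $-1\le f\le 1$, $f\equiv 1$ on $C^+$, $f\equiv -1$ on $C^-$ and $\supp f\subseteq V$; splitting $\int f\,d\nu^\pm$ over $C^\pm$ and $V\setminus C^\pm$ and using that $f\equiv 0$ off $V$ yields the displayed estimate by an elementary computation. (In the complex case the same conclusion follows from Lusin's theorem applied to the polar decomposition of the restriction of $\mu(s_0)$ to $V$, but only the real case is needed in this section.)

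\emph{Transferring the estimate.} Because $f\in C(K)$, the function $s\mapsto\int_K f\,d\mu(s)=\mu(s)(f)$ is continuous, so there is an open neighbourhood $U$ of $s_0$ with $\int_K f\,d\mu(s)>\int_K f\,d\mu(s_0)-\eps/2$ for every $s\in U$, hence $\int_K f\,d\mu(s)>|\mu(s_0)|(V)-\eps$ on $U$. On the other hand, since $\|f\|_\infty\le 1$ and $\supp f$ is a compact subset of $V$,
$$
\int_K f\,d\mu(s)\le|\mu(s)|(\supp f)\le|\mu(s)|(V)
$$
for every $s$. Combining the two inequalities gives $|\mu(s)|(V)>|\mu(s_0)|(V)-\eps$ for all $s\in U$, which is the claim. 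The only genuine content is the first step — that the variation of a regular measure on an open set is nearly attained by the integral of a continuous function supported in that set; the $w^*$-continuity enters only through the trivial second step.
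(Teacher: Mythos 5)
Your proof is correct: the paper only cites this lemma from Johnson--Wolfe without reproducing an argument, and your argument (inner regularity plus a Urysohn test function $f$ supported in $V$ with $\int_K f\,d\mu(s_0)$ close to $|\mu(s_0)|(V)$, then transferring via $w^*$-continuity and the bound $\int_K f\,d\mu(s)\le|\mu(s)|(V)$) is exactly the standard proof of this lower-semicontinuity statement.
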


The next result is a version of \cite[Lemma 2.3]{JoWo} in which the main difference
is that we start with an operator and a function  in the unit sphere of $C(K)$
where the operator almost attains its norm and construct a new
operator and a new function, both close to the previous elements and satisfying
additional restrictions.  Condition iii) is the new ingredient that will be useful
to our purpose.

\begin{lemma}
\label{funct-attains-close}
Let $\mu: S \longrightarrow M(K)$ be a $w^*$-continuous function  satisfying $\Vert \mu \Vert =1$
and  $0 < \delta < 1$. Suppose that $s_0  \in S$ and $f_0 \in S _{C(K)}$ satisfy $\int_K f_0 \ d
\mu (s_0) > 1 - \frac{\delta^2}{12}$. Then there exist a $w^*$-continuous mapping $\mu^\prime : S
\longrightarrow M(K)$, an open set $U$ in $S$, an open set $V$ of $K$ and $h_0 \in  C(K)$
satisfying the following   conditions:
\begin{itemize}
\item[i)]
$\vert \mu^\prime  (s)\vert (V) =0$ for every $s \in U$.
\item[ii)] $\int _{K} h_0 \ d \mu^\prime  (s) \ge  \Vert \mu ^\prime \Vert - \delta$  for every $s \in U$.
\item[iii)] $\Vert h_0 - f_0 \Vert < \delta $.
\item[iv)] $\Vert h_0  \Vert =1 $ and  $\vert h_0(t) \vert =1 \ \ \forall t \in K \setminus V$.
\item[v)] $\Vert \mu^\prime - \mu  \Vert < \delta  $.
\end{itemize}
\end{lemma}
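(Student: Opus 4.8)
The goal is to perturb $\mu$ and $f_0$ locally near $s_0$ so that the new measure $\mu'$ is "flat" (has no mass) on some open set $V$, while the new function $h_0$ has modulus $1$ off $V$ and stays close to $f_0$. I would start from the hypothesis $\int_K f_0\,d\mu(s_0) > 1 - \delta^2/12$, which, since $\|f_0\|=1$ and $\|\mu(s_0)\|\le 1$, forces $f_0$ to be close to $+1$ on the bulk of the mass of $\mu(s_0)$. Concretely, setting $A = \{t\in K : f_0(t) > 1 - \delta/2\}$, a standard estimate gives $|\mu(s_0)|(K\setminus A)$ small (of order $\delta$) and hence also $\|\mu(s_0)\| > 1 - (\text{something small})$. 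Let me denote $c = \|\mu(s_0)\|$, which is close to $1$.

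**Choosing $V$ and building $h_0$.**

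The open set $V$ should be a small neighborhood of the "bad" set where $f_0$ is not near $\pm 1$, chosen so that $|\mu(s_0)|(V)$ is tiny. By regularity of $\mu(s_0)$, pick an open $V$ containing $\{t : |f_0(t)| \le 1 - \delta/2\}$ (or a suitable such set) with $|\mu(s_0)|(V) < \delta^2/12$ or so. Then I would define $h_0$ on $K$ by pushing $f_0$ outward to have modulus exactly $1$ on $K\setminus V$: on $K\setminus V$ one has $|f_0| > 1 - \delta/2$, so replacing $f_0(t)$ by $f_0(t)/|f_0(t)|$ (in the complex case) or by $\operatorname{sign} f_0(t)$ (real case) changes it by less than $\delta/2$; then extend continuously across $V$ using a Tietze-type/Urysohn gluing so that $\|h_0 - f_0\| < \delta$ and $\|h_0\| = 1$. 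This gives conditions iii) and iv).

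**Constructing $\mu'$ and the neighborhood $U$.**

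For $\mu'$, the idea is to kill the mass of $\mu(s)$ on $V$ for $s$ near $s_0$ and renormalize, interpolating back to $\mu$ away from $s_0$. First, using Lemma~\ref{lem-w*lower-semicontinuous} applied to $K\setminus \overline{V}$ (or rather its complement — one wants an upper bound on $|\mu(s)|(V)$, so apply the lemma to a slightly larger open set and take complements, or apply it directly with the roles arranged so that $|\mu(s)|(V)$ stays small on a neighborhood of $s_0$), choose an open neighborhood $U_0$ of $s_0$ on which $|\mu(s)|(V)$ is small. Then pick, by Urysohn, a continuous $\varphi: S \to [0,1]$ with $\varphi(s_0) = 1$ and $\varphi$ supported in $U_0$, and set $\mu'(s) = \mu(s) - \varphi(s)\,\chi_V \cdot \mu(s)$, suitably renormalized so that $\|\mu'\| = 1$; one checks $\mu'$ remains $w^*$-continuous because $\chi_V \cdot \mu(s)$ varies $w^*$-continuously enough on the support of $\varphi$ (this needs care: $s\mapsto \chi_V\mu(s)$ is generally only lower semicontinuous, so I would instead cut by an open $V$ with $|\mu(s_0)|(\partial V)=0$ and use that $|\mu(s)|(V)$ is both upper and lower semicontinuous there, or work with a slightly shrunk closed set inside $V$). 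On a smaller neighborhood $U \subseteq U_0$ where $\varphi \equiv 1$, we get $|\mu'(s)|(V) = 0$, giving i). For ii), on $U$ the measure $\mu'(s)$ is concentrated on $K\setminus V$ where $|h_0|\equiv 1$ and $h_0$ agrees with the normalization of $f_0$, which is positive-real-part near $+1$ on the bulk of the mass; combining the mass estimates yields $\int_K h_0\,d\mu'(s) \ge \|\mu'\| - \delta$. For v), $\|\mu'(s) - \mu(s)\| \le \varphi(s)\,|\mu(s)|(V) + (\text{renormalization error})$, both controlled by the smallness of $|\mu(s)|(V)$ on $U_0$ plus the renormalization, which is $O(\delta)$.

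**Main obstacle.**

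The delicate point is the $w^*$-continuity of $\mu'$: multiplying a $w^*$-continuous measure-valued function by the indicator of a fixed Borel set need not preserve $w^*$-continuity, since $\chi_V$ is not continuous. The fix — and the technical heart of the argument — is to choose $V$ with $|\mu(s)|(\partial V) = 0$ for all relevant $s$ (or to replace the sharp cutoff by sandwiching between a compact set and an open set on which the mass is uniformly small), so that $s \mapsto \mu(s)|_V$ and $s\mapsto \mu(s)|_{K\setminus V}$ are each $w^*$-continuous on the support of $\varphi$. Managing this simultaneously with all five quantitative estimates, and in particular tracking how the constant $\delta^2/12$ in the hypothesis propagates (it is consumed partly by the mass estimate from $\int f_0\,d\mu(s_0)$ being close to $1$, partly by the choice of $V$, partly by renormalization), is where the real work lies; the rest is bookkeeping with Urysohn functions and elementary measure-theoretic inequalities.
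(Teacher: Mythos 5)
Your overall strategy (flatten $f_0$ to modulus one off a small exceptional open set $V$, then kill the mass of $\mu(s)$ on $V$ for $s$ near $s_0$) is the same as the paper's, but the proposal has two genuine gaps, one of which you flag yourself without resolving it.

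First, the opening mass estimate is wrong for signed measures: from $\int_K f_0\,d\mu(s_0)>1-\delta^2/12$ you cannot conclude that $|\mu(s_0)|$ puts small mass on the complement of $A=\{f_0>1-\delta/2\}$, since $\mu(s_0)$ may carry a large negative part concentrated where $f_0$ is near $-1$. The correct set is $D_x=(K^+\cap\{f_0>x\})\cup(K^-\cap\{f_0<-x\})$, where $K=K^+\cup K^-$ is a Hahn decomposition for $\mu(s_0)$; the paper shows $|\mu(s_0)|(D_x)>1-\frac{\delta^2/12}{1-x}$, and it is this two-sided estimate that feeds all the later bookkeeping. Your later remarks about the set $\{|f_0|\le 1-\delta/2\}$ suggest you sense this, but the quantitative backbone as written is incorrect.

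Second, and more seriously: you define $\mu'(s)=\mu(s)-\varphi(s)\,\chi_V\mu(s)$ and correctly observe that multiplying by the indicator $\chi_V$ destroys $w^*$-continuity, but your proposed repairs do not work. A single open $V$ with $|\mu(s)|(\partial V)=0$ for every $s$ in a (typically uncountable) neighborhood of $s_0$ need not exist, and even pointwise boundary-nullity does not yield $w^*$-continuity of $s\mapsto\chi_V\mu(s)$ for signed measures. The paper's resolution is simple and is the real content of the lemma: replace $\chi_V$ by a continuous Urysohn function $f\in C(K)$ with $f\equiv 1$ on $V$ and $\supp f\subset K\setminus W_c$, where $W_c=\{t: |f_0(t)|>c\}$ and $c=1-\delta/4$, and set $\mu'(s)=\bigl(1-g(s)f\bigr)\mu(s)$ with $g$ a Urysohn function on $S$. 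Then for each $\varphi\in C(K)$ one has $\int\varphi\,d\mu'(s)=\int\varphi\,d\mu(s)-g(s)\int\varphi f\,d\mu(s)$, which is manifestly continuous in $s$; condition i) holds on $U=\{g=1\}$ because $f\equiv 1$ on $V$; and conditions ii) and v) follow from the single inequality $\int_K h_0(1-f)\,d\mu(s_0)>1-\delta$ propagated to a neighborhood of $s_0$ via Lemma~\ref{lem-w*lower-semicontinuous}. Note also that the lemma does not assert $\|\mu'\|=1$, so your renormalization step is unnecessary and would only add error terms to v).
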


\begin{proof}
Let us write $\mu_0:= \mu(s_0)$. By the Hahn decomposition theorem, there is a partition of $K$
into two measurable sets $K^+$ and  $K^-$ such that $K^+$ is a positive set for $\mu_0$ and $K^-$
is negative for $\mu_0$. For every $0 < x < 1$, consider two open subsets of $K$ given by
$$
O_x^+:=\{ t \in K \, :\, f_0(t) > x \}, \quad  O_x^-:= \{ t \in K \, :\, f_0(t) < -x \},
$$
and consider the set
$$
D_x:= \bigl(K^+ \cap O_x^+\bigr) \cup \bigl( K^- \cap O_x^-\bigr).
$$
Write $\alpha = \frac{\delta^2}{12}$. By the assumption, we have
\begin{align*}
   1- \alpha  & < \int_K f_0 \ d \mu_0 \le
  \vert \mu_{0} \vert  (D_x)  +x  \vert \mu_{0} \vert  ( K \setminus D_x) \\ & \le
 \vert \mu_{0}\vert (D_x)  +
  x  (1-  \vert \mu_{0}\vert (D_x) )   =
(1-x)  \vert \mu_{0}\vert (D_x) + x.
\end{align*}
Hence,
\begin{equation}\label{mu-0-Dx-grande}
\vert \mu_0 \vert (D_x) > 1- \frac{\alpha}{1-x} \, .
\end{equation}
Next, consider the open subset $W_x$ of $K$ given by
$$
W_x:= O_x^+ \cup O_x^-= \{ t \in K: \vert f_0(t) \vert > x \},
$$
and observe that, since $D_x \subset W_x$, we have
\begin{equation}
\label{mu-0-Wx-grande}
 \vert \mu_0 \vert (W_x) \ge  \vert \mu_0 \vert (D_x) \ge
 1- \frac{\alpha }{1-x} \ .
\end{equation}
Write $c:=1- \frac{\delta}{4}$ and choose real numbers $a$ and $b$ with $ 1-\delta < a < b< c<1$.
As the open subset $W_a$ contains $\overline{O_b^+ \cup O_b^-}$, there is $u \in C(K)$ such that
$0 \le u \le 1$, $u\equiv 1$ on $O_b^+ \cup O_b^-$ and $\supp u \subset W_a$. Since the support
of $u$ is contained in $W_a$ (where $f_0$ is separated from $0$), the function $h_0$
defined on $K$  by
$$
h_0(t)= \frac{f_0(t)}{\vert f_0 (t)\vert} u(t) + (1-u(t)) f_0(t)\,  \quad \text{ if} \quad
f_0(t)\ne 0,  \quad h_0(t) =0\, \text{ otherwise},
$$
is continuous and, actually, $h_0\in B_{C(K)}$. We claim that $\Vert h_0 - f_0 \Vert <  \delta $,
which guarantees condition iii). Indeed, if $t \in K \setminus W_a$, then $u(s)=0$ and so
$h_0(t)= f_0(t)$; if otherwise $t \in W_a$, we have that
$$
\vert h_0(t)- f_0(t) \vert = u(t) \left\vert \frac{f_0(t) }{\vert f_0(t) \vert} -
f_0 (t) \right\vert \le 1- \vert f_0(t) \vert < 1 - a < \delta,
$$
proving the claim. On the other hand, we know that $u\equiv 1$ in $O_b^+ \cup O_b^-$, so $\vert
h_0 \vert =1 $ on $\overline{O_b^+ \cup O_b^-}$. Therefore, if we write $V:=  K \setminus
\left[\overline{O_b^+ \cup O_b^-}\right]$, which is an open subset of $K$, the second part of
condition iv) is satisfied.

Next, as the open subsets $V $ and $W_c$ satisfy $ \overline{ V} \cap \overline{ W_c} =
\varnothing$, there is a function $f \in C(K)$ such that $0 \le f \le 1$, $f\equiv 1$ on $V$ and
$\supp f \subset K \setminus W_c$. Since $D_c \subset W_c$, we have that
\begin{align*}
  \int_{W_c}  h_0    \ d \mu_0 & =
\int_{D_c}  h_0    \ d \mu_0 + \int_{W_c \setminus D_c}  h_0    \ d \mu_0
 = \vert  \mu_{0} \vert (D_c) + \int_{W_c \setminus D_c}  h_0    \ d \mu_0
 \\
 & \ge \vert  \mu_{0} \vert (D_c) - \vert  \mu_{0} \vert (W_c \setminus D_c)
 \ge \vert  \mu_{0} \vert (D_c) - \vert  \mu_{0} \vert (K\setminus D_c)
 \ge 2\vert  \mu_{0} \vert (D_c) - \vert  \mu_{0} \vert (K).
\end{align*}
Therefore, by using \eqref{mu-0-Dx-grande}, we obtain that
\begin{equation}
\label{int-W-c-h0-grande}
  \int_{W_c}  h_0    \ d \mu_0  > 1 - 2\frac{ \alpha  }{ 1-c}
  = 1- \frac{2}{3} \delta.
\end{equation}
As a consequence,
\begin{align*}
\int_K h_0 (1-f) \ d \mu_0  = &   \int_{W_c}  h_0 (1-f)    \ d \mu_0 +
 \int_{K\setminus W_c}  h_0 (1-f)    \ d \mu_0
\\
 &  \\
 & \ge
 \int_{W_c}  h_0 (1-f)    \ d \mu_0 -
\vert \mu_{0}   \vert (K \setminus W_c)  \\
 &  \\
& > 1-  2\frac{ \alpha  }{ 1-c}  - \bigl( 1 - \vert \mu_{0}   \vert ( W_c)
\bigr)   \ \ \ \ \text{\rm (by \eqref{int-W-c-h0-grande})}
 \\
&  \\
 &  > 1- 3 \frac{ \alpha  }{ 1-c}  = 1 - \delta \ \ \ \
\ \ \ \ \text{\rm (by \eqref{mu-0-Wx-grande})} \\
\end{align*}
Now, in view of the $w^*$-continuity of $\mu$, the previous inequality, condition
\eqref{mu-0-Wx-grande} and Lemma~\ref{lem-w*lower-semicontinuous}, we get that
there exists an open neighborhood $U_0$ of $s_0$ such that
\begin{equation}
\label{int-h0-1-f-s-variacion-grande}
\int _K h_0(1-f) d \mu (s) > 1 - \delta \qquad \text{and} \qquad \vert \mu (s) \vert (W_c) > 1 -
\delta, \quad \forall s \in U_0.
\end{equation}
 We can also choose an open subset $U$ of $S$ such that $s_0 \in U$ and $\overline{U} \subset
U_0$, and a function $g\in C(S)$ such that $0 \le g \le 1 $, $g(U)=\{1\}$ and $\supp g \subset
U_0$. Define $\mu^\prime : S \longrightarrow M(K)$ by
$$
\mu^\prime (s) = \bigl( 1 - g(s) f \bigr) \mu (s) , \qquad  (s \in S),
$$
that is, $\mu^\prime (s)$ is the unique Borel measure  on $K$ satisfying
$$
\int _K \varphi \ d \mu^\prime (s) =  \int _K  \bigl( 1 - g(s) f \bigr) \varphi \
d \mu (s) \qquad \forall \varphi \in C(K).
$$
It is clear that $\mu^\prime $ is $w^*$-continuous. If $s \in U$, $g(s)=1$ and $f(V)=\{1\}$, so
condition i) is satisfied. Since $0\le f, g \le 1$, then $\Vert \mu ^ \prime \Vert \le \Vert \mu
\Vert=1$ and hence, in view of \eqref{int-h0-1-f-s-variacion-grande}, for every $s \in U$  we
have that
$$
\int _K h_0 \ d \mu^\prime (s) =  \int _K  \bigl( 1 - g(s) f \bigr) h_0 \ d \mu (s) >  1 - \delta
\ge \Vert \mu^\prime \Vert - \delta,
$$
so condition ii) is also satisfied.

We only have to check condition v), that is,  $\|\mu^\prime-\mu\|<\delta$. Indeed, if $s \in S
\setminus U_0$, then $g(s)=0$ and so $ \mu^\prime (s)= \mu (s)$; if, otherwise, $s \in U_0$, by
\eqref{int-h0-1-f-s-variacion-grande} and the fact that $f(W_c)=\{0\}$, we obtain that
$$
\Bigl \vert \int _K \varphi \ d (\mu (s) - \mu ^\prime (s) )  \Bigr \vert  = \Bigl
\vert \int _K \varphi  g(s) f \ d \mu (s)   \Bigr \vert \le
\vert \mu  (s) \vert (K \setminus W_c) \le  \Vert \mu \Vert  -  \vert \mu  (s)
\vert ( W_c) < \delta
$$
for every $\varphi \in B_{C(K)}$, proving the claim.

Finally, since $\Vert \mu \Vert  = 1$, condition v) implies that $\mu ^\prime\ne 0$ and, in view
of i), we deduce that $ K \ne  V $, so $K\setminus V $ is not empty and $\Vert h_0 \Vert =1$
since $\vert h_0(t) \vert = 1$ for every $t \in K \setminus V$.
\end{proof}

The last ingredient that we will use is the next iteration result due to Johnson
and Wolfe.

\begin{lemma}[\mbox{\cite[Lemma 2.4]{JoWo}}]\label{Jo-Wolf-Lemma24}
Let $\mu:S\longrightarrow M(K)$ be $w^*$-continuous and $\delta>0$. Suppose there is an open set
$U\subset S$, an open set $V\subset K$, $s_0\in U$ and $h_0\in C(K)$ with $\|h_0\|=1$ such that
\begin{enumerate}
\item[i)] if $s\in U$, then $|\mu(s)|(V)=0$,
\item[ii)] $\int_K h_0\,d\mu(s_0) \geq \|\mu\|-\delta$,
\item[iii)] $|h_0(t)|=1$ for $t\in K\setminus V$.
\end{enumerate}
Then, for any $\frac23<r<1$ there exist a $w^*$-continuous function $\mu^\prime:S\longrightarrow
M(K)$ and a point $s_1\in U$ such that
\begin{enumerate}
\item[i)] if $s\in U$, then $|\mu^\prime(s)|(V)=0$,
\item[ii)] $ \int_K h_0\,d\mu^\prime(s_1)  \ge   \|\mu^\prime\|-r\delta $,
\item[iii)] $\|\mu^\prime - \mu\|\leq r\delta$.
\end{enumerate}
\end{lemma}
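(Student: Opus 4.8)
The plan is to build $\mu'$ of the form $\mu'(s)=\ell(s)\bigl(1-g(s)\phi\bigr)\mu(s)$, where $\ell,g\colon S\to[0,1]$ and $\phi\colon K\to[0,1]$ are continuous: the factor $1-g(s)\phi$, with $g$ localized near a well-chosen point $s_1\in U$ and $\phi$ (essentially) the indicator of the set $E_{\mu(s_1)}$ where the sign of $\mu(s_1)$ disagrees with $h_0$, trims away most of the ``misaligned'' part of $\mu(s_1)$, while $\ell$ damps $\mu$ down away from $s_1$ so as to keep $\|\mu'\|$ under control. Since $0\le\ell(s)(1-g(s)\phi)\le1$, any such $\mu'$ is automatically $w^*$-continuous, satisfies $|\mu'(s)|(V)\le|\mu(s)|(V)=0$ on $U$ (so condition i) is free), and has $\|\mu'(s)\|\le\ell(s)\|\mu(s)\|$. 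After normalizing $\|\mu\|=1$, conditions i) and iii) give the sign decomposition I will use: $K\setminus V=K^+\sqcup K^-$ with $K^{\pm}=\{h_0=\pm1\}\cap(K\setminus V)$ closed, and for $\nu=\mu(s)$ ($s\in U$), with Hahn decomposition $K=P\sqcup N$, the set $E_\nu=(K^+\cap N)\cup(K^-\cap P)$ obeys $\int_K h_0\,d\nu=\|\nu\|-2|\nu|(E_\nu)$ and $\int_{E_\nu}h_0\,d\nu=-|\nu|(E_\nu)$; from ii), $\|\mu(s_0)\|\ge1-\delta$ and $|\mu(s_0)|(E_{\mu(s_0)})\le\tfrac12\delta$.

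The crux is that $s\mapsto\int_K h_0\,d\mu(s)$ is continuous, whereas $s\mapsto\|\mu(s)\|$ is only $w^*$-lower semicontinuous. Writing $\gamma^*:=\limsup_{s\to s_0}\|\mu(s)\|\in[1-\delta,1]$, I would fix an open $W\ni s_0$ with $\|\mu(\cdot)\|\le\gamma^*+\varepsilon$ on $W$, pick $s_1\in W\cap U$ with $\|\mu(s_1)\|>\gamma^*-\varepsilon$ and $\int_K h_0\,d\mu(s_1)>1-\delta-\varepsilon$, choose $\phi$ equal to $1$ on a compact subset of $E_{\mu(s_1)}$ carrying all but $\varepsilon$ of its $|\mu(s_1)|$-mass and supported in an open $G$ with $|\mu(s_1)|(G)<|\mu(s_1)|(E_{\mu(s_1)})+\varepsilon$, and take $g$ equal to $1$ near $s_1$ and supported in a small enough neighbourhood of $s_1$ inside $W$. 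For the damping, set a target $\gamma:=\max\{1-r\delta,\ \gamma^*+\varepsilon\}+\varepsilon'$ with $\varepsilon'$ small (and simply take $\ell\equiv1$, $\|\mu'\|\le1$, should this exceed $1$), and use Urysohn's lemma together with $\|\mu(\cdot)\|\le\gamma^*+\varepsilon\le\gamma$ on $W$ to produce a continuous $\tilde n\colon S\to[\gamma,1]$ that equals $\gamma$ on $\supp g$, equals $1$ off $W$, and dominates $\|\mu(\cdot)\|$ everywhere; put $\ell:=\gamma/\tilde n$. Then $\ell\equiv1$ on $\supp g$, $\ell(s)\|\mu(s)\|\le\gamma$ for all $s$, and so $\|\mu'\|\le\gamma$.

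It then remains to verify ii) and iii). For iii), $\|\mu'(s)-\mu(s)\|=(1-\ell(s))\|\mu(s)\|+\ell(s)g(s)\int_K\phi\,d|\mu(s)|$, and the two summands are supported on disjoint parts of $S$ (the first vanishes on $\supp g$ since $\ell\equiv1$ there, the second vanishes off $\supp g$); the first is $\le\tilde n(s)-\gamma\le1-\gamma<r\delta$, and the second, on $\supp g$, is bounded by $|\mu(s_1)|(E_{\mu(s_1)})+O(\varepsilon)\le\tfrac12\delta+O(\varepsilon)$ once $\supp g$ is taken small enough — this uses $w^*$-lower semicontinuity of $s\mapsto\int_K(1-\phi)\,d|\mu(s)|$ and the fact that $\|\mu(\cdot)\|$ stays within $2\varepsilon$ of $\|\mu(s_1)\|$ near $s_1$. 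For ii), evaluate at $s_1$: $\ell(s_1)=g(s_1)=1$, so $\mu'(s_1)=(1-\phi)\mu(s_1)$, and the design of $\phi$ together with $\int_{E_{\mu(s_1)}}h_0\,d\mu(s_1)=-|\mu(s_1)|(E_{\mu(s_1)})$ gives $\int_K h_0\,d\mu'(s_1)\ge\tfrac12\bigl(\|\mu(s_1)\|+\int_K h_0\,d\mu(s_1)\bigr)-O(\varepsilon)$. Combining with $\|\mu'\|\le\gamma$, $\|\mu(s_1)\|>\gamma^*-\varepsilon$, $\int_K h_0\,d\mu(s_1)>1-\delta-\varepsilon$ and $1-\delta\le\gamma^*\le1$, a short computation yields $\|\mu'\|-\int_K h_0\,d\mu'(s_1)\le\tfrac12\delta+O(\varepsilon)$, which for $r>2/3$ is $<r\delta$ once $\varepsilon,\varepsilon'$ are small; thus $\mu'$ and $s_1$ satisfy all requirements.

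The step I expect to be the main obstacle is precisely the $w^*$-lower semicontinuity of $s\mapsto\|\mu(s)\|$: mass can accumulate arbitrarily close to $s_0$ while $\|\mu(s_0)\|$ stays strictly below $\|\mu\|$, which is why one cannot just work at $s_0$ but must move to a nearby point $s_1$ of nearly maximal norm, and why the damping $\ell$ has to be constructed by hand (continuous, identically $1$ near $s_1$, dominating the scaled norm elsewhere). Related to this is the need to control the trimming cost $\int_K\phi\,d|\mu(s)|$ on a whole neighbourhood of $s_1$, not merely at $s_1$ — handled via lower semicontinuity of $s\mapsto\int_K\psi\,d|\mu(s)|$ for continuous $\psi\ge0$ — and the bookkeeping that keeps the trimming and the damping acting on disjoint subsets of $S$, so that the overall perturbation of $\mu$ is the maximum, and not the sum, of the two contributions.
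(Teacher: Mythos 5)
The paper offers no proof of this lemma to compare against: it is imported verbatim from Johnson--Wolfe \cite[Lemma 2.4]{JoWo}. Judged on its own, your argument is, as far as I can check, correct and complete, and it isolates the right difficulties. After normalizing $\|\mu\|=1$, the identities $\int_K h_0\,d\nu=\|\nu\|-2|\nu|(E_\nu)$ and $\int_{E_\nu}h_0\,d\nu=-|\nu|(E_\nu)$ for $\nu=\mu(s)$, $s\in U$, do follow from hypotheses i) and iii); passing from $s_0$ to a nearby $s_1$ of almost maximal local norm is exactly what the mere lower semicontinuity of $s\mapsto\|\mu(s)\|$ forces; the damping $\ell=\gamma/\tilde n$ gives $\|\mu'\|\le\gamma$ with $\ell\equiv1$ on $\supp g$; and since the density of $\mu(s)-\mu'(s)$ with respect to $\mu(s)$ is $1-\ell(s)+\ell(s)g(s)\phi\ge0$, the norm of the difference is exactly the sum of your two terms, which live on complementary parts of $S$, so $\|\mu'-\mu\|\le\max\bigl\{1-\gamma,\ \sup_{s\in\supp g}\int_K\phi\,d|\mu(s)|\bigr\}\le r\delta$. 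Your final estimate yields $\|\mu'\|-\int_K h_0\,d\mu'(s_1)\le\max\bigl\{(1-r)\delta,\tfrac12\delta\bigr\}+O(\varepsilon)$, so the construction in fact works for every $r>\tfrac12$, which is more than the statement requires. Two details are worth writing out explicitly if you formalize this: (a) the lower semicontinuity of $s\mapsto\int_K\psi\,d|\mu(s)|$ for continuous $\psi\ge0$ (used for $\psi=1-\phi$, and a strengthening of Lemma~\ref{lem-w*lower-semicontinuous}) follows from $\int_K\psi\,d|\nu|=\sup\bigl\{\int_K\varphi\,d\nu:\varphi\in C(K),\ |\varphi|\le\psi\bigr\}$, a supremum of $w^*$-continuous functionals; (b) in selecting $s_1$ you should first shrink to an open $W''\subset W\cap U$ containing $s_0$ on which $\int_K h_0\,d\mu(\cdot)>1-\delta-\varepsilon$ (continuity at $s_0$) and only then invoke $\sup_{s\in W''}\|\mu(s)\|\ge\gamma^*$, taking the convention $\gamma^*=\inf_{W\ni s_0}\sup_{s\in W}\|\mu(s)\|\ge\|\mu(s_0)\|\ge1-\delta$ so that the case of an isolated $s_0$ is also covered. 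Neither point is a gap.
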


The next result improves \cite[Theorem 1]{JoWo}.

\begin{theorem}
\label{BPB-CK-CS}
Let $K$ and $S$ be compact Hausdorff topological spaces. Then the pair $(C(K),
C(S))$ has the Bishop-Phelps-Bollob\'{a}s property for operators in the real case.
Moreover, the
\linebreak[4]
function $\eta$ satisfying Definition~\ref{def-BPBP-operadores} does not depend on
the spaces $K$ and $S$ (in fact one can take $\eta (\varepsilon)=
\frac{\varepsilon^2 }{12\cdot 6^2} $).
\end{theorem}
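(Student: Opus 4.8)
The plan is to translate everything through Lemma~\ref{lem-isometric-isomorphism}, identifying an operator $T\in\mathcal{L}(C(K),C(S))$ with the $w^*$-continuous map $\mu\colon S\to M(K)$, $\mu(s)=T^*\delta_s$, so that $[Tf](s)=\int_K f\,d\mu(s)$ and $\|T\|=\|\mu\|=\sup_s\|\mu(s)\|$. Fix $\varepsilon>0$ (which we may assume small), set $\delta:=\varepsilon/6$, so that the prescribed $\eta(\varepsilon)=\frac{\varepsilon^2}{12\cdot 6^2}$ equals $\frac{\delta^2}{12}$, and suppose $T\in S_{\mathcal{L}(C(K),C(S))}$, $f_0\in S_{C(K)}$ with $\|Tf_0\|>1-\eta(\varepsilon)$. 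Then $\sup_s\bigl|\int_K f_0\,d\mu(s)\bigr|>1-\frac{\delta^2}{12}$, so some $s_0\in S$ has $\bigl|\int_K f_0\,d\mu(s_0)\bigr|>1-\frac{\delta^2}{12}$; replacing $f_0$ by $-f_0$ if necessary (which at the end only replaces $u_0$ by $-u_0$) we may assume $\int_K f_0\,d\mu(s_0)>1-\frac{\delta^2}{12}$. This is exactly the hypothesis of Lemma~\ref{funct-attains-close}; applying it with this $\delta$ produces $\mu_1:=\mu'$, open sets $U\subset S$ (with $s_0\in U$) and $V\subset K$, and $h_0\in C(K)$ satisfying i)--v) there. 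In particular $\|h_0-f_0\|<\delta$, $\|\mu_1-\mu\|<\delta$, $\|h_0\|=1$, $|h_0(t)|=1$ on $K\setminus V$, and $|\mu_1(s)|(V)=0$, $\int_K h_0\,d\mu_1(s)\ge\|\mu_1\|-\delta$ for all $s\in U$ --- that is, $(\mu_1,U,V,s_0,h_0)$ meets the hypotheses of Lemma~\ref{Jo-Wolf-Lemma24} with that same $\delta$.

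Next I would iterate Lemma~\ref{Jo-Wolf-Lemma24} with a fixed ratio $r\in(2/3,1)$ chosen close to $2/3$. Starting from $\mu_1$ (whose gap at $s_0$ is $\le\delta$) it yields $\mu_2$ with $\|\mu_2-\mu_1\|\le r\delta$, $|\mu_2(s)|(V)=0$ on $U$, and $\int_K h_0\,d\mu_2(s_1)\ge\|\mu_2\|-r\delta$ for some $s_1\in U$; feeding $\mu_2$ back in with parameter $r\delta$ gives $\mu_3$, and inductively one gets $w^*$-continuous $\mu_n$ with $\|\mu_{n+1}-\mu_n\|\le r^n\delta$, $|\mu_n(s)|(V)=0$ on $U$, and $\int_K h_0\,d\mu_{n+1}(s_n)\ge\|\mu_{n+1}\|-r^n\delta$ for some $s_n\in U$. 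As $\mathcal{L}(C(K),C(S))$ is complete, $(\mu_n)$ converges in operator norm to a function $\mu_\infty$ which is again $w^*$-continuous (uniform limits of $w^*$-continuous functions are $w^*$-continuous), with $\|\mu_\infty-\mu_1\|\le\frac{r\delta}{1-r}$, hence $\|\mu_\infty-\mu\|<\delta+\frac{r\delta}{1-r}=\frac{\delta}{1-r}$; moreover $|\mu_\infty(s)|(V)=0$ for every $s\in U$ and $\|\mu_\infty\|\ge 1-\|\mu_\infty-\mu\|>1-\frac{\delta}{1-r}>0$.

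Then I would show $\mu_\infty$ \emph{attains} its norm. The function $s\mapsto\int_K h_0\,d\mu_\infty(s)$ is continuous (as $\mu_\infty$ is $w^*$-continuous and $h_0\in C(K)$) on the compact set $\overline{U}\supseteq\{s_n\}$, so it attains its supremum there at some $s^*$; since $\int_K h_0\,d\mu_\infty(s_n)\ge\int_K h_0\,d\mu_{n+1}(s_n)-\|\mu_\infty-\mu_{n+1}\|\ge\|\mu_{n+1}\|-r^n\delta-\|\mu_\infty-\mu_{n+1}\|\to\|\mu_\infty\|$, while always $\int_K h_0\,d\mu_\infty(s)\le\|\mu_\infty(s)\|\le\|\mu_\infty\|$, we get $\int_K h_0\,d\mu_\infty(s^*)=\|\mu_\infty\|=\|\mu_\infty(s^*)\|$. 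Letting $S$ be the operator corresponding to $\mu_\infty/\|\mu_\infty\|$ and $u_0:=h_0$, one has $S\in S_{\mathcal{L}(C(K),C(S))}$, $u_0\in S_{C(K)}$, $\|Su_0\|=\frac{1}{\|\mu_\infty\|}\bigl|\int_K h_0\,d\mu_\infty(s^*)\bigr|=1$, and $\|u_0-f_0\|=\|h_0-f_0\|<\delta<\varepsilon$; so all of Definition~\ref{def-BPBP-operadores} holds except $\|S-T\|<\varepsilon$, and for that one has $\|S-T\|\le\bigl|1-\|\mu_\infty\|\bigr|+\|\mu_\infty-\mu\|$.

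The genuinely delicate point is this last estimate. The perturbation $\|S-T\|$ accumulates three contributions --- the displacement $<\delta$ from Lemma~\ref{funct-attains-close}, the geometric sum $\le\frac{r\delta}{1-r}$ from the iteration, and the renormalization term $|1-\|\mu_\infty\||$ --- and the role of the choices $\delta=\varepsilon/6$ and $r\downarrow 2/3$ is to force their total strictly below $\varepsilon$, which is exactly what pins down the explicit, $K$- and $S$-independent value $\eta(\varepsilon)=\frac{\varepsilon^2}{12\cdot 6^2}$. The renormalization term must be treated carefully, since the crude bound $|1-\|\mu_\infty\||\le\|\mu_\infty-\mu\|<\frac{\delta}{1-r}$ is essentially tight at the boundary $r=2/3$: one exploits instead that $\mu_\infty$ attains its norm through $h_0$, which is $\delta$-close to $f_0$ (a point at which $\mu$ almost attained its norm), to see that $\|\mu_\infty\|$ is in fact much closer to $1$ than the crude bound gives --- or, equivalently, organizes the last step so that passing to the unit sphere costs essentially nothing. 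With that in hand, the soft limiting/compactness argument for the norm attainment of $\mu_\infty$ and the remaining bookkeeping are routine.
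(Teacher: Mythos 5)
Your overall strategy is exactly the paper's: pass to the $w^*$-continuous representation of the operator, apply Lemma~\ref{funct-attains-close} once to produce $h_0$, $U$, $V$ and $\mu_1$, iterate Lemma~\ref{Jo-Wolf-Lemma24} with a ratio $r\in(2/3,1)$, pass to the uniform limit $\mu_\infty$, observe that the corresponding operator attains its norm at $h_0$, and normalize. All of that is sound (your compactness argument on $\overline{U}$ for the norm attainment is a correct, slightly more elaborate, version of the paper's device of simply letting $n\to\infty$ in $\Vert T_n\Vert\le\Vert T_n h_0\Vert+r^n\delta$).

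The gap is precisely the step you flag and then leave open. With $\delta=\varepsilon/6$ fixed, the only estimate available is $\Vert S-T\Vert\le\bigl|1-\Vert\mu_\infty\Vert\bigr|+\Vert\mu_\infty-\mu\Vert\le 2\Vert\mu_\infty-\mu\Vert<\frac{2\delta}{1-r}=\frac{\varepsilon}{3(1-r)}$, and since $r>2/3$ forces $1-r<1/3$, this exceeds $\varepsilon$ for every admissible $r$; you notice this correctly. But your proposed rescue --- that $\Vert\mu_\infty\Vert$ must be much closer to $1$ because the norm is attained through $h_0$ --- is not substantiated and does not hold: each application of Lemma~\ref{Jo-Wolf-Lemma24} can genuinely erode the norm by as much as $r^n\delta$, so all one can guarantee is $\Vert\mu_\infty\Vert\ge\Vert\mu_1\Vert-\frac{r\delta}{1-r}\ge 1-\frac{\delta}{1-r}$, which is exactly the crude bound again. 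The correct (and much simpler) repair, which is what the paper implicitly relies on, is to exploit the strictness of the hypothesis: if $\Vert Tf_0\Vert>1-\frac{\varepsilon^2}{12\cdot 6^2}$, then there is some $\delta<\varepsilon/6$ with $\Vert Tf_0\Vert>1-\frac{\delta^2}{12}$ still, and for that $\delta$ the interval $\bigl(\frac23,\,1-\frac{2\delta}{\varepsilon}\bigr)$ is nonempty; choosing $r$ there gives $\frac{2\delta}{1-r}<\varepsilon$ and the proof closes. So: same route as the paper, one quantitative step unresolved, and your sketched fix for it should be replaced by this shrink-$\delta$ argument.
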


\begin{proof}
Let us fix $\frac{2}{3} < r < 1 $. Given $ 0< \varepsilon < 2$ let us choose $0 <
\delta < \varepsilon \frac{1-r}{2}$. Assume that $T_0 \in S_{\mathcal{L} (C(K),
C(S))} $ and $f_0 \in S_{C(K)}$ satisfy that $\Vert T_0 (f_0) \Vert > 1 -
\frac{\delta ^2 }{12}$. Then, there is an element $s_1 \in S$ such that $\left\vert
\bigl[T_0 (f_0)\bigr] (s_1) \right\vert > 1 - \frac{\delta ^2 }{12}$. By using
$-f_0$ instead of $f_0$, if necessary, we may assume that $T_0 (f_0) (s_1) > 1 -
\frac{\delta ^2 }{12}$. Therefore, we can apply Lemma~\ref{funct-attains-close} to
the $w^*$-continuous function $\mu_0: S \longrightarrow M(K)$ associated with the
operator $T_0$ (i.e.\ $\mu_0(s)=T_0^*(\delta_s)$ for every $s\in S$) to get that
there exist a function $h_0 \in S_{C(K)}$, an open set $U$ in $S$, an open set $V$
of $K$ and a $w^*$-continuous function $\mu_1:S \longrightarrow M(K)$ satisfying
the following conditions:
\begin{itemize}
\item[i)]
$\vert \mu_1  (s)\vert (V) =0$ for every $s \in U$.
\item[ii)] $\int _{K} h_0\, d\mu_1  (s) \ge  \Vert \mu_1\Vert - \delta$  for every $s \in U$.
\item[iii)] $\Vert h_0 - f_0 \Vert < \delta $.
\item[iv)]   $ \Vert h_0 \Vert = 1 $ and  $\vert h_0(t) \vert =1 \ \ \forall t \in K \setminus V$.
\item[v)] $\Vert \mu_1 - \mu_0  \Vert < \delta  $.
\end{itemize}
Now, by using Lemma~\ref{Jo-Wolf-Lemma24}, we  inductively construct a sequence
$\{\mu_n\}$ of $w^*$-continuous functions from $S$ into $M(K)$ and a sequence $\{
s_n\}$ in $U$ satisfying
\begin{itemize}
\item[i)] $\Vert \mu_{n+1}- \mu_{n}  \Vert \le r^n \delta  $.
\item[ii)]
$\Vert \mu_n \Vert \le \int _K h_0 \,d\mu_n(s_n) + r^n \delta $
\item[iii)] $\vert \mu_n   (s)\vert (V) =0$ for every $s \in U$ and $n \in \N$.
\end{itemize}
If for every $n \in \N$, we write $T_n \in \mathcal{L}(C(K), C(S))$ to denote the
bounded linear operator associated with the function $\mu_n$, we
may rewrite i) and ii) as
\begin{equation}
\label{T-n-h0-close-norm}
\Vert T_{n+1}- T_{n}  \Vert \le r^n \delta \qquad \text{and} \qquad \Vert T_n \Vert \le \Vert T_n
(h_0) \Vert  + r^n \delta.
\end{equation}
Since $0 < r < 1$, the  previous condition implies that $\{T_n\} $ is a Cauchy sequence, so it
converges to an operator $T \in \mathcal{L}(C(K), C(S))$ satisfying
$$
\Vert T - T_0 \Vert \le \sum _{k=0}^\infty \Vert T_{k+1} -  T_k  \Vert \le  \sum _{k=0}^\infty
r^k \delta = \delta \frac{1}{1-r} < \frac{\varepsilon}{2}.
$$
By taking limit in the right-hand side of
\eqref{T-n-h0-close-norm}, we also have that
$$
\Vert T \Vert \le \Vert T (h_0 ) \Vert
$$
and, since $h_0 \in S_{C(K)}$, $T$ attains its norm at $h_0$.

Finally, we have that
$$
\left\vert  1 - \Vert T \Vert \right\vert \le \bigl\vert  \Vert T_0 \Vert - \Vert T \Vert
\bigr\vert \le \Vert T_0 - T \Vert <\frac{\varepsilon}{2} < 1,
$$
so $T\neq 0$, $\frac{T}{\|T\|}$ also attains its norm at $h_0$ and
$$
\left \Vert  \frac{T}{\Vert T \Vert } - T_0 \right \Vert \le \left \Vert  \frac{T}{\Vert T \Vert
} - T \right \Vert + \Vert  T - T_0  \Vert = \bigl\vert 1 - \Vert T \Vert \bigr\vert + \Vert T -
T_0 \Vert < \varepsilon.
$$
As we already knew that $\Vert h_0 -f_0 \Vert < \delta < \varepsilon$, this shows that the pair
$(C(K), C(S))$ satisfies the Bishop-Phelps-Bollob\'{a}s Theorem for operators with $\eta
=\frac{\delta ^2}{12}$.
\end{proof}

\section{Compact operators from a space of continuous functions into a uniformly convex
space}
\label{sec:c(K)toUniformlyconvex}

Our purpose now is to prove the Bishop-Phelps-Bollob\'{a}s property for compact
operators. The following result due to Kim will play an essential role:

\begin{lemma}[\mbox{\cite[Theorem 2.5]{Kim}}]
\label{teo-Kim-UC}
Let $Y$ be a uniformly convex  space. For every $0 < \varepsilon < 1$,  there is $
0 < \gamma (\varepsilon) <1$ with the following property:
\newline
given  $n \in \N$, $T \in S_{\mathcal{L} (\ell_\infty ^n , Y)} $ and $ x_0 \in S_{\ell_\infty
^n}$ such that $ \Vert T x_0 \Vert > 1 - \gamma (\varepsilon )$, there exist $S \in
S_{\mathcal{L} (\ell_\infty ^n , Y)}$ and $ x_1 \in S_{\ell_\infty ^n}$ satisfying
$$
\Vert S x_1 \Vert =1, \quad \Vert S - T \Vert < \varepsilon \quad \hbox{and} \quad \Vert x_1 -
x_0 \Vert < \varepsilon \ .
$$
\end{lemma}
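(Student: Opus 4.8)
The plan is to exploit the polyhedral structure of $\ell_\infty^n$ together with the quantitative content of uniform convexity. Identify $T\in\mathcal L(\ell_\infty^n,Y)$ with the tuple $(y_1,\dots,y_n)=(Te_1,\dots,Te_n)\in Y^n$, so that $Tx=\sum_i x_iy_i$ and, since $B_{\ell_\infty^n}$ is the convex hull of $\{-1,1\}^n$ and $x\mapsto\|Tx\|$ is convex, $\|T\|=\max_{\varepsilon\in\{-1,1\}^n}\bigl\|\sum_i\varepsilon_iy_i\bigr\|$. Composing $T$ with a diagonal $\pm1$ matrix (an isometry of both $\ell_\infty^n$ and $\mathcal L(\ell_\infty^n,Y)$) I may assume $x_{0,i}\ge0$ for all $i$. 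Let $\delta_Y$ be the modulus of convexity of $Y$ and choose $\gamma=\gamma(\varepsilon)>0$ with $\varepsilon^{-1}\delta_Y^{-1}(\gamma)<\varepsilon/4$ and $\gamma/\varepsilon$ as small as needed; crucially $\gamma$ depends only on $\varepsilon$ and $\delta_Y$. Split $\{1,\dots,n\}$ into the \emph{boundary} set $N=\{i:x_{0,i}>1-\varepsilon\}$ (nonempty, as $\|x_0\|_\infty=1$) and the \emph{interior} set $M$, on which $1-x_{0,i}\ge\varepsilon$.

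\textbf{Step 1 (the interior block is collectively small).} For any signs $\eta$ and any $M'\subseteq M$, the vector $p$ with $p_i=\eta_i(1-x_{0,i})$ for $i\in M'$ and $0$ elsewhere keeps $x_0\pm p\in B_{\ell_\infty^n}$, while $\frac12\bigl(T(x_0+p)+T(x_0-p)\bigr)=Tx_0$ has norm $>1-\gamma$; uniform convexity then gives $\bigl\|\sum_{i\in M'}\eta_i(1-x_{0,i})y_i\bigr\|=\|Tp\|<\frac12\delta_Y^{-1}(\gamma)$. Enumerating $M=\{j_1,\dots,j_m\}$ with $1-x_{0,j_1}\ge\cdots\ge1-x_{0,j_m}$, all partial sums $S_K=\sum_{k\le K}\eta_{j_k}(1-x_{0,j_k})y_{j_k}$ have norm $<\frac12\delta_Y^{-1}(\gamma)$, and an Abel summation against the non-decreasing coefficients $(1-x_{0,j_k})^{-1}\in[1,\varepsilon^{-1}]$ yields $\bigl\|\sum_{i\in M}\eta_iy_i\bigr\|\le\varepsilon^{-1}\delta_Y^{-1}(\gamma)<\varepsilon/4$ for every $\eta$. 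Hence the operator $\widetilde T$ with $\widetilde Te_i=y_i$ $(i\in N)$ and $\widetilde Te_i=0$ $(i\in M)$ satisfies $\|\widetilde T-T\|<\varepsilon/4$.

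\textbf{Step 2 (the boundary block prefers the all-$+1$ pattern).} Pick $y_0^*\in S_{Y^*}$ norming $Tx_0$ and put $a_i=y_0^*(y_i)$, so $\|(a_i)\|_1\le1$ and $\sum_i a_ix_{0,i}>1-\gamma$. A short estimate (the $a_i$ with $i\in M$, and the negative $a_i$, together carry $\ell_1$-mass $O(\gamma/\varepsilon)$) gives $\rho:=\bigl\|\sum_{i\in N}y_i\bigr\|\ge\sum_{i\in N}a_i>1-O(\gamma/\varepsilon)$, whereas $\bigl\|\sum_{i\in N}\eta_iy_i\bigr\|\le\|T\|=1$ for all $\eta$. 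Define $x_1$ by $x_{1,i}=1$ $(i\in N)$, $x_{1,i}=x_{0,i}$ $(i\in M)$; then $\|x_1-x_0\|_\infty<\varepsilon$, $\|\widetilde Tx_1\|=\rho$, and $\mu:=\|\widetilde T\|$ satisfies $\rho\le\mu\le1$, so $\mathbf 1_N$ misses the maximum of $\eta\mapsto\|\sum_{i\in N}\eta_iy_i\|$ by only $\mu-\rho=O(\gamma/\varepsilon)$.

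\textbf{Step 3 (assembly) and the main obstacle.} If $\mu=\rho$ one is done: $S:=\widetilde T/\mu$ has $\|Sx_1\|=\rho/\mu=1=\|S\|$ and $\|S-T\|\le(1-\mu)+\varepsilon/4<\varepsilon$. The difficulty is the general case: one must perturb the boundary block to an $S$ with $\|Sx_1\|=\|S\|$ while keeping $\|S-\widetilde T\|$ small \emph{uniformly in $n$}. This is genuinely delicate. Such a perturbation must have norm at least $\frac12(\mu-\rho)$, which is fine in size; but the naive ``add $\frac{\lambda}{|N|}$ copies of a fixed vector to each $y_i$, $i\in N$'' is useless, since a linear operator always assigns $\mathbf 1_N$ and $-\mathbf 1_N$ the same value — so the perturbation must instead \emph{lower} the value of every competing sign pattern while leaving that of $\mathbf 1_N$ essentially unchanged, doing this for all of the (up to $2^{|N|}$) competitors at once with a total adjustment that does not grow with $n$. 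Resolving this — presumably by a perturbation carefully adapted to the near-maximizing patterns of $\widetilde T$, or by an iteration that successively improves the pattern and the operator in the spirit of Lemma~\ref{funct-attains-close} and Lemma~\ref{Jo-Wolf-Lemma24} of the previous section — is the heart of the argument. Granting it, after normalizing one obtains $S\in S_{\mathcal L(\ell_\infty^n,Y)}$ with $\|Sx_1\|=1$, $\|S-T\|<\varepsilon$, $\|x_1-x_0\|_\infty<\varepsilon$, and the admissible $\gamma(\varepsilon)$ is an explicit function of $\varepsilon$ and $\delta_Y$ alone.
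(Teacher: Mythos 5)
First, a point of order: the paper does not prove this lemma at all --- it is imported verbatim from \cite[Theorem~2.5]{Kim} --- so there is no internal proof to compare yours against; your attempt has to stand on its own. It does not, because Step~3 is an explicitly acknowledged gap rather than a proof. Steps~1 and~2 are sound: the midpoint trick $\tfrac12\bigl(T(x_0+p)+T(x_0-p)\bigr)=Tx_0$ combined with uniform convexity does control each signed partial sum over the interior block, the Abel summation against the monotone coefficients $(1-x_{0,j_k})^{-1}\in[1,\varepsilon^{-1}]$ is legitimate, and the $\ell_1$-mass estimate showing $\bigl\|\sum_{i\in N}y_i\bigr\|>1-O(\gamma/\varepsilon)$ is correct. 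But all of that is only a reduction: it replaces $(T,x_0)$ by a pair $(\widetilde T,x_1)$ at which the norm is \emph{almost} attained, with a possibly better constant. The lemma asks for exact norm attainment, and the passage from ``almost'' to ``exact'' with a perturbation of size controlled by $\varepsilon$ alone (uniformly in $n$ and in the operator) is precisely the content of the theorem, not a loose end.

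To see that the missing step is the whole difficulty and not a technicality, take $n=2$ and $x_0=(1,1)$: then $M=\varnothing$, $\widetilde T=T$, $x_1=x_0$, and your Steps~1--2 say nothing; everything is deferred to Step~3. Moreover the paper itself records that for a general Banach space $Y$ the pair $(\ell_\infty^2,Y)$ can fail the BPBp, so whatever resolves Step~3 must use uniform convexity of $Y$ in an essential second way --- your only quantitative use of $\delta_Y$ so far is on the interior block, which is empty in the hard case. Your own analysis of why the naive rank-one correction fails (any perturbation $\widetilde T+\lambda\,\xi\otimes w$ with $\langle x_1,\xi\rangle=1$ raises the value at competing sign patterns by almost as much as at $\mathbf{1}_N$, unless $\lambda$ grows with $n$) is accurate and shows you understand the obstruction, but identifying an obstruction is not overcoming it; and the suggested alternative of a Johnson--Wolfe-type iteration is not obviously available here, since the candidate points $x_1$ live in the discrete set of sign vectors and one would need the successive operator corrections both to be geometrically summable and to terminate at a fixed sign pattern. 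As written, the argument proves a weakened statement (``there exist $S$ with $\|S\|-\|Sx_1\|=O(\gamma/\varepsilon)$ and $\|S-T\|<\varepsilon/2$''), not Lemma~\ref{teo-Kim-UC}.
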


It is easy to show that the function $\gamma $ in the previous result satisfies
$\lim _{t
\to 0+} \gamma (t)=0$.

Let $L$ be a locally compact Hausdorff topological space. As usual, $C_0(L)$ will
be the space either of real or  complex continuous functions on $L$ with limit zero
at infinity. We recall that $C_0(L)^*$ can be identified with the space $M(L)$ of
regular Borel measures on $L$ by the Riesz representation theorem.

For every $f \in C_0(L)$ and every (non-empty) set  $S \subset L$, we define $\osc (f, S)$  by
$$
\osc (f, S) =  \sup_{x,y \in S}\vert f(x)- f(y)\vert
$$

  The next result generalizes Lemma 3.1 and
Proposition 3.2 of \cite{JoWo} to $C_0(L)$. Its proof is actually based on the
proof of these results.

\begin{proposition}
\label{jowopro}
Let $L$ be a locally compact Hausdorff topological space and  let $Y$  be  a Banach space. For
every $\varepsilon>0$, $T\in \mathcal{K}(C_0(L),Y)$  and $f_0\in C_0(L)$, there exist a positive
regular Borel measure $\mu$, a non-negative integer $m$, pairwise disjoint compact subsets $K_j$
of $L$ and $\varphi_j\in C_0(L)$ for $1 \le j \le m$,  satisfying the following conditions:
\begin{enumerate}
\item $\osc (f_0, K_j) < \varepsilon$.
\item  $0 \le \varphi _j \le 1$ and $\varphi _{j}\equiv 1$ on $K_j$.
\item $\supp \varphi_i \cap \supp \varphi_j = \emptyset$ for $i \ne j$.
\item  The operator $P:  C_0(L) \longrightarrow C_0(L)$ given by
$$
P(f):= \sum _{j=1}^m \frac{1}{\mu (K_j )} \Bigl( \int_{K_j} f \ d \mu \Bigr)
\varphi  _j, \quad \forall f \in C_0(L),
$$
 is a norm-one projection from $C_0(L)$ onto the linear
span of $\{\varphi_1,\ldots,\varphi_m\}$ that also satisfies $\|T-TP\|<\varepsilon $.
\end{enumerate}
\end{proposition}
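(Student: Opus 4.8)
The plan is to follow the strategy of Johnson--Wolfe, adapting it from $C(K)$ to $C_0(L)$ and keeping careful track of the oscillation condition (1), which is the genuinely new requirement. The starting observation is that $T$ is compact, so $T^*$ maps $B_{Y^*}$ into a norm-compact subset of $M(L)=C_0(L)^*$. Fix a dense sequence $(y_n^*)$ in $B_{Y^*}$ and set $\nu := \sum_n 2^{-n} |T^* y_n^*|$; this is a positive regular Borel measure (finite, since each $\|T^*y_n^*\|\le 1$) that dominates $|T^*y^*|$ in the sense that $\nu(B)=0$ forces $T^*y^*(B)=0$ for all $y^*\in B_{Y^*}$, hence $T(f)=0$ whenever $f$ vanishes $\nu$-a.e.\ on a Borel set. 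The role of this $\nu$ is to be the measure $\mu$ in the statement, possibly after a further normalization; the key point is that a single positive measure controls all the action of $T$.

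Next I would localize. By norm-compactness of $T^*(B_{Y^*})$ and the regularity of $\nu$, one can find a \emph{compact} set $K\subset L$ carrying almost all of the mass of $\nu$ and of every $|T^*y^*|$: more precisely, given $\varepsilon>0$ choose compact $K$ with $\nu(L\setminus K)$ so small that $|T^*y^*|(L\setminus K)<\varepsilon/3$ uniformly in $y^*\in B_{Y^*}$ (uniformity comes from the norm-compactness, covering $T^*(B_{Y^*})$ by finitely many small balls and using regularity for each center). On the compact set $K$, the continuous function $f_0$ is uniformly continuous, so $K$ admits a finite cover by relatively open sets on each of which the oscillation of $f_0$ is less than $\varepsilon$; shrinking and disjointifying (using normality of $K$ and then pulling the pieces apart slightly inside $L$, which is locally compact Hausdorff hence completely regular) yields finitely many pairwise disjoint compact sets $K_1,\dots,K_m\subset K$ with $\osc(f_0,K_j)<\varepsilon$ and with $\nu\bigl(K\setminus\bigcup_j K_j\bigr)$ arbitrarily small — here I must also ensure $\nu(K_j)>0$ by discarding null pieces, which costs nothing since they carry no mass of any $T^*y^*$ either. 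This gives (1), and standard Urysohn-type functions on the locally compact Hausdorff space $L$ produce the $\varphi_j\in C_0(L)$ with $0\le\varphi_j\le 1$, $\varphi_j\equiv 1$ on $K_j$, and disjoint supports, giving (2) and (3). The projection $P$ defined by the displayed averaging formula then has norm at most $1$ because the $\varphi_j$ have disjoint supports and each coefficient is an average of $f$ against the probability measure $\mu|_{K_j}/\mu(K_j)$; it is a projection onto $\operatorname{span}\{\varphi_1,\dots,\varphi_m\}$ since $P\varphi_i=\varphi_i$, and its norm is exactly $1$ (apply it to a function that is $1$ on some $K_j$).

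The main obstacle is the estimate $\|T-TP\|<\varepsilon$, i.e.\ controlling $\bigl|\langle T^*y^*, f-Pf\rangle\bigr|$ uniformly over $y^*\in B_{Y^*}$ and $f\in B_{C_0(L)}$. I would split $L$ into $L\setminus K$, the ``gaps'' $K\setminus\bigcup_j K_j$, and the $K_j$ themselves. On the first two pieces $|T^*y^*|$ has total mass $<\varepsilon/3$ each by construction, and $\|f-Pf\|\le 2$, so those contributions are $O(\varepsilon)$. On each $K_j$, write $f-Pf = \bigl(f - c_j(f)\bigr) - \bigl(Pf - c_j(f)\varphi_j\bigr)$ where $c_j(f)=\frac{1}{\mu(K_j)}\int_{K_j}f\,d\mu$; on $K_j$ one has $\varphi_j\equiv 1$ and $|f-c_j(f)|\le\osc(f,K_j)$, and here is where I need the $K_j$ to be chosen so that $f$ — not just $f_0$ — has small oscillation on them. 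Since $f$ ranges over the whole unit ball this is \emph{not} automatic, so the correct reading (and what the proof must actually do) is to subdivide $K$ finely enough that, simultaneously, the mass $\nu(K\setminus\bigcup K_j)$ is tiny and each $K_j$ is small enough in the topology that the remaining mass of $|T^*y^*|$ inside the $K_j$'s, when paired with $(f-Pf)$, is absorbed: concretely one uses that $\int_{K_j}(f-Pf)\,dT^*y^*$ has small absolute value because $Pf$ reproduces the $\mu$-average of $f$ on $K_j$ and $T^*y^*$ restricted to $K_j$ is close, in variation, to a multiple of $\mu|_{K_j}$ — a uniform-continuity/compactness argument on the compact set $T^*(B_{Y^*})\subset M(L)$ allowing a single partition to work for all $y^*$. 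Summing the three contributions gives $\|T-TP\|<\varepsilon$ after renaming $\varepsilon$ by a fixed constant multiple at the outset. Finally, rescaling $\mu$ (it is finite and positive) does not affect $P$, so one may present $\mu$ as any convenient positive regular Borel measure with the stated properties.
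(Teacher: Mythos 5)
Your setup---a single dominating positive measure obtained from compactness of $T^*$, localization to a compact set, subdivision via uniform continuity of $f_0$, Urysohn functions, and the observation that $P$ is a norm-one projection fixing the $\varphi_j$---is essentially the paper's, but the proof breaks down exactly where you flag the difficulty, and the fix you sketch does not work. You correctly observe that $\osc(f,K_j)$ cannot be controlled for arbitrary $f\in B_{C_0(L)}$, and you then propose to rescue the estimate $\|T-TP\|<\varepsilon$ by subdividing $K$ ``finely enough'' that $T^*y^*$ restricted to each $K_j$ becomes close in variation to a scalar multiple of $\mu|_{K_j}$. Topological fineness of the partition does not deliver this: if the Radon--Nikod\'{y}m derivative $g = d(T^*y^*)/d\mu$ oscillates between, say, $+1$ and $-1$ on two interleaved sets of positive $\mu$-measure, then no matter how small the $K_j$ are in the topology of $L$, $g$ need not be nearly constant on them, and $T^*y^*|_{K_j}$ stays far in variation from every multiple of $\mu|_{K_j}$. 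In a general locally compact Hausdorff space there is no Lebesgue-differentiation mechanism to appeal to, so the ``uniform-continuity/compactness argument on $T^*(B_{Y^*})$'' you invoke at the crucial moment has no content as stated. (A minor additional slip: a dense sequence in $B_{Y^*}$ need not exist; one should instead take $(y_n^*)$ with $(T^*y_n^*)$ dense in the separable set $T^*(B_{Y^*})$.)

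The missing idea is measure-theoretic, not topological: take a finite $\frac{\varepsilon}{4}$-net $\{\mu_1,\dots,\mu_t\}$ of the norm-compact set $T^*(B_{Y^*})$, put $\mu=\sum_{i=1}^t|\mu_i|$, write $\mu_i=g_i\mu$ by Radon--Nikod\'{y}m, approximate each $g_i$ in $L_1(\mu)$ by a simple function $s_i=\sum_j\alpha_j^i\chi_{A_j}$ over a \emph{common} partition $(A_j)$, and only then shrink the $A_j$ to compact sets and subdivide further to impose $\osc(f_0,K_j)<\varepsilon$; this refinement is harmless because it keeps the approximating measures of the form $\nu_i=\sum_j\beta_j^i\chi_{K_j}\mu$. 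The whole point is that such measures satisfy $P^*\nu_i=\nu_i$, so
$$
\|T^*y^*-P^*T^*y^*\|\le\|T^*y^*-\nu_i\|+\|P^*\nu_i-P^*T^*y^*\|\le 2\,\|T^*y^*-\nu_i\|<\varepsilon
$$
for a suitable $i$, with no need to estimate $f-Pf$ on the sets $K_j$ at all. The partition is dictated by the level sets of the finitely many densities $g_i$ (finiteness coming from the compactness of $T^*$), and that is what allows a single partition to work simultaneously for all $y^*\in B_{Y^*}$.
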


\begin{proof}
Since $T$ is a compact operator,  the adjoint operator $T^*$ is a compact operator
from $Y^*$ into $C_0(L) ^*$, so we may take a finite $\frac{\varepsilon}{4}$-net
$\{\mu_1,\ldots,\mu_t\}$ of $T^*(B_{Y^*})\subset C_0(L)^*\equiv M(L)$. We define
the (finite regular) measure $\mu$ by $\mu=\sum^t_{i=1}|\mu_i |$.  For each $1\leq
i\leq t$, we have that $\mu_i\ll \mu$, hence the Radon-Nikod\'{y}m theorem allows
us to find a function $g_i\in L_1(\mu)$ such that $\mu_i=g_i\mu$. Since the set of
simple functions is dense in $L_1(\mu)$, we may choose a set of simple functions
$\{s_i\,:\, i=1,\ldots,  t\}$ such that $\|g_i-s_i\|_1<\frac{\varepsilon}{12}$ for
every $1\le i \le t $. Next, consider a finite family $(A_j)_{j=1}^{m_0}$ of
pairwise disjoint measurable sets such that for every $1 \le i \le t$, there is a
family $(\alpha_j^i)_{j=1}^{m_0}$ of scalars such that
$s_i=\sum\limits^{m_0}_{j=1}\alpha_j^i\chi_{A_j}$. Let $M$ be a positive real
number satisfying $ M \ge \max \bigl\{ |\alpha_j^i|: 1\le i \le t, 1 \le j \le
m_0\bigr\} $.  Since $\mu$ is regular, for each $1 \le j \le m_0$ we find a compact
set $C_j \subset A_j$ such that $\mu(A_j\setminus
C_j)<\frac{\varepsilon}{12{m_0}M}$. As $f_0$ is continuous and each $C_j$ is
compact, we may divide each $C_j$ into a family of Borel sets $(B_j^p)_{p=1}^{n_j}$
such that
$$
\osc (f_0, B_j^p)<\varepsilon \qquad  \forall 1 \le j \le m_0, 1 \le p \le n_j .
$$
Applying the regularity again, for each $j$ and $p$, there is  a compact set $K_j^p
\subset B_{j}^p $ such that $\mu(B_j^p\setminus K_j^p)<\frac{\varepsilon}{12
m_0n_jM}$. Finally, choose suitable $m\in \mathbb{N}$, a rearrangement
$(K_j)_{j=1}^m$ of the family $\{K_j^p\,: \, 1 \le j \le m_0,\, 1 \le p \le n_j,\;
\mu \bigl( K_j ^p  \bigr) > 0\}$ and scalars $(\beta_j^i)$ for $ j \le m $ and $i
\le t$ such that
$$
\sum^m_{j=1}\beta_j^i \chi_{K_j}=\sum^{m_0}_{j=1} \alpha_j^i \biggl(  \sum_{p=1}^{n_j} \chi_{K_j^p}
\biggr) \ .
$$
Using Urysohn lemma, we may choose a family $(\varphi_j)_{j=1}^m$
in $C_0(L)$ satisfying that $0\leq \varphi_j\leq 1$, \;
$\varphi_j\equiv 1$ on $K_j$ for each $j \le m$ and $\supp \varphi_i \cap \supp
\varphi_j = \emptyset$ for every $i \neq j$.

To finish the proof, we only have to check (4). Indeed, for $i=1,\ldots,t$, we
write $\nu_i = \sum_{j=1}^m\beta_j^i\chi_{K_j}\mu\in M(L)=C_0(L)^*$. By defining
the operator $P$ as in condition (4), it is easy to check that $P$ is a norm one
projection onto the linear span on $\{ \varphi_1, \ldots, \varphi _m\}$ and
$P^*\nu_i=\nu_i $ for each $1\leq i \le t$.  Therefore,  for every $ 1 \le i \le t$
we have that
\begin{align*}
\|\mu_i-P^*\nu_i\|
&= \|g_i\mu-\nu_i\|\leq\|g_i\mu-s_i\mu\|+\|s_i\mu-\nu_i\|\\
&\leq \|g_i-s_i\|_1+ \biggl \Vert s_i\mu-\sum^{m_0}_{j=1}\alpha_j^i\chi_{C_j}\mu \biggr \Vert
 +\biggl \Vert \sum^{m_0}_{j=1}\alpha_j^i\chi_{C_j}\mu- \sum^m_{j=1}\beta_j^i\chi_{K_j}\mu \biggr \Vert \\
&<\frac{\varepsilon}{12} +\sum^{m_0}_{j=1} \bigl \vert \alpha_j^i \bigr \vert \mu
\bigl(A_j\setminus C_j
\bigr) +
\biggl
\Vert
\sum^{m_0}_{j=1} \biggl( \alpha_j^i \sum_{p=1}^{n_j} \chi_{B_j^p}  \biggr) \mu -
 \sum^{m_0}_{j=1} \biggl( \alpha_j^i \sum_{p=1}^{n_j} \chi_{K_j^p} \biggr)  \mu \biggr \Vert \\
&<\frac{\varepsilon}{12}+ \frac{\varepsilon}{12} +
\sum^{m_0}_{j=1} \bigl \vert \alpha_j^i  \bigr \vert \sum_{p=1}^{n_j} \mu \bigl( B_{j}^p \setminus K_{j}^p \bigr)\\
&<\frac{\varepsilon}{12} + \frac{\varepsilon}{12} + \frac{\varepsilon}{12} =
 \frac{\varepsilon}{4}\ .
\end{align*}
Since $\{ \mu_1, \ldots, \mu _t \}$ is a  $\frac{\varepsilon}{4}$-net of
$T^*(B_{Y^*})$, the above inequality  shows that $\{ \nu_1, \ldots, \nu _t \}$ is a
$\frac{\varepsilon}{2}$-net of $T^*(B_{Y^*})$. Now, given $y^*\in B_{Y^*}$, we can
choose $i \le t$ satisfying $\|\nu_i-T^*y^*\|< \frac{\varepsilon}{2}$ and observe
that
\begin{align*}
\|T^*y^*-P^*T^*y^*\|
&\leq \|T^*y^*-\nu_i\|+\|\nu_i-P^*T^*y^*\|\\
&= \|T^*y^*-\nu_i\|+\|P^*\nu_i-P^*T^*y^*\|  \leq 2 \|T^*y^*-\nu_i\|< \varepsilon.
\end{align*}
Hence, we have $\|T-TP\|=\|T^*-P^*T^*\|<\varepsilon$, as desired.
\end{proof}

The following result shows that $\mathcal{K}(C_0(L),Y)$ satisfies the Bishop-Phelps-Bollob\'{a}s
property for every locally compact Hausdorff topological space $L$ and every uniformly convex
space $Y$, and that the function $\eta(\eps)$ involved in the definition of the property does not
depend on $L$.

\begin{theorem}
\label{BPB-comp-C(K)-UC}
Let $Y$ be a uniformly convex Banach space. For  every $0 < \varepsilon < 1$ there is $ 0 < \eta
(\varepsilon ) < 1 $ such that for any locally compact Hausdorff topological space $L$, if $T\in
S_{\mathcal{K}(C_0(L), Y)}$ and $f_0 \in S_{C_0(L)} $ satisfy $\Vert T f_0 \Vert > 1 -
\eta(\varepsilon )$, there exist $S \in S_{\mathcal{K}(C_0(L), Y)}$ and $g_0 \in S_{ C_0(L)}$
such that
$$
\Vert S g_0 \Vert =1, \quad \Vert S - T \Vert < \varepsilon \quad \hbox{and} \quad \Vert g_0-
f_0  \Vert < \varepsilon \ .
$$
\end{theorem}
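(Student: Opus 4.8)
The plan is to compress $T$ to an operator on a finite-dimensional space $\ell_\infty^m$ using the finite-rank projection furnished by Proposition~\ref{jowopro}, apply Kim's finite-dimensional result (Lemma~\ref{teo-Kim-UC}) there, and then lift the outcome back to $C_0(L)$; the one subtle point is the choice of the new norming function.

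Fix $0<\varepsilon<1$ and set $\varepsilon_2=\varepsilon/2$. Let $\gamma$ be the function of Lemma~\ref{teo-Kim-UC} attached to $Y$ (one may clearly assume $\gamma(t)\le t$), and pick $\eta=\eta(\varepsilon)>0$ and $\varepsilon_1>0$ small enough that $\eta+\varepsilon_1\le\gamma(\varepsilon_2)$ and $\varepsilon_2+\eta+3\varepsilon_1<\varepsilon$ (for instance $\eta=\varepsilon_1=\tfrac1{16}\min\{\gamma(\varepsilon/2),\varepsilon\}$); this choice visibly does not involve $L$. Given $T\in S_{\mathcal{K}(C_0(L),Y)}$ and $f_0\in S_{C_0(L)}$ with $\|Tf_0\|>1-\eta$, I would first apply Proposition~\ref{jowopro} to $T,f_0$ with parameter $\varepsilon_1$; since its proof produces the $\varphi_j$ via Urysohn's lemma only after the compact sets $K_j$ have been fixed, I may moreover demand that $\supp\varphi_j$ lie in neighbourhoods of $K_j$ small enough (by continuity of $f_0$ and condition~(1)) that $\osc(f_0,\supp\varphi_j)<\varepsilon_1$. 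Necessarily $m\ge1$, since $m=0$ would give $\|T\|=\|T-TP\|<\varepsilon_1<1$. As the $\varphi_j$ have pairwise disjoint supports, satisfy $0\le\varphi_j\le1$ and equal $1$ on the nonempty $K_j$, the map $E\colon\ell_\infty^m\to C_0(L)$, $Ea=\sum_{j=1}^m a_j\varphi_j$, is an isometry onto the range of $P$; writing $R\colon C_0(L)\to\ell_\infty^m$, $Rf=\bigl(\tfrac1{\mu(K_j)}\int_{K_j}f\,d\mu\bigr)_{j=1}^m$, one has $\|R\|\le1$, $RE=\Id$ and $ER=P$.

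Next I would run Kim's lemma on the compressed data. Put $\widetilde T=TE$ (so $\|\widetilde T\|\le1$) and $x_0'=Rf_0\in B_{\ell_\infty^m}$; then $\widetilde T x_0'=TPf_0$, so $\|\widetilde T x_0'\|\ge\|Tf_0\|-\|T-TP\|>1-\eta-\varepsilon_1$. Hence $\|x_0'\|>1-\eta-\varepsilon_1$ and $1-\eta-\varepsilon_1<\|\widetilde T\|\le1$, so setting $x_0=x_0'/\|x_0'\|\in S_{\ell_\infty^m}$ and $T_1=\widetilde T/\|\widetilde T\|\in S_{\mathcal L(\ell_\infty^m,Y)}$ gives $\|x_0-x_0'\|<\eta+\varepsilon_1$ and $\|T_1x_0\|\ge\|\widetilde T x_0'\|>1-\eta-\varepsilon_1\ge1-\gamma(\varepsilon_2)$. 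Lemma~\ref{teo-Kim-UC} then supplies $V\in S_{\mathcal L(\ell_\infty^m,Y)}$ and $x_1\in S_{\ell_\infty^m}$ with $\|Vx_1\|=1$, $\|V-T_1\|<\varepsilon_2$ and $\|x_1-x_0\|<\varepsilon_2$. Finally I would set $S=VR\in\mathcal K(C_0(L),Y)$ (finite rank, $\|S\|\le1$) and
\[
g_0=f_0+\sum_{j=1}^m\bigl((x_1)_j-f_0\bigr)\varphi_j\in C_0(L),
\]
i.e.\ $g_0$ is the convex combination $(1-\varphi_j)f_0+\varphi_j(x_1)_j$ on $\supp\varphi_j$, equals $(x_1)_j$ on $K_j$, and equals $f_0$ off $\bigcup_j\supp\varphi_j$. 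The first description yields $\|g_0\|\le1$; the second yields $Rg_0=x_1$ (using $K_j\cap\supp\varphi_i=\varnothing$ for $i\neq j$), whence $Sg_0=Vx_1$ has norm $1$, which forces $\|g_0\|=\|S\|=1$, so $S$ attains its norm at $g_0$. The identity $S-T=(V-T_1)R+(T_1-\widetilde T)R+(TP-T)$ gives $\|S-T\|<\varepsilon_2+\eta+2\varepsilon_1<\varepsilon$, and for $t\in\supp\varphi_j$ and any $s\in K_j$ the bound $|(x_1)_j-f_0(t)|\le\|x_1-x_0\|+\|x_0-x_0'\|+\osc(f_0,K_j)+\osc(f_0,\supp\varphi_j)<\varepsilon_2+\eta+3\varepsilon_1<\varepsilon$ gives $\|g_0-f_0\|<\varepsilon$. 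Since $\eta$ does not depend on $L$ and Lemma~\ref{teo-Kim-UC} covers both scalar fields, this settles the real and complex cases.

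I expect the main obstacle to be exactly the choice of $g_0$: the naive candidate $Ex_1$ lies in the range of $P$ and need not be close to $f_0$, because $P$ can displace $f_0$ badly — the cutoffs $\varphi_j$ drop to $0$ while $f_0$ stays near its $K_j$-averages. Altering $f_0$ only locally around the $K_j$ — leaving it untouched where $\varphi_j$ vanishes and using $0\le\varphi_j\le1$ to stay inside the unit ball — is what simultaneously keeps $g_0$ close to $f_0$, of norm one, and with $K_j$-averages equal to $(x_1)_j$, the property that makes $S$ attain its norm at $g_0$; it is also what forces the (harmless) strengthening of Proposition~\ref{jowopro} used above.
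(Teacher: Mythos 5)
Your proof is correct and follows essentially the same route as the paper's: compress $T$ through the norm-one projection of Proposition~\ref{jowopro} to an operator on $\ell_\infty^m$, apply Kim's Lemma~\ref{teo-Kim-UC} there, and pull back, repairing the norming function by modifying $f_0$ only near the sets $K_j$. The only (valid) difference is in that last step: the paper glues $f_0$ to $\sum_j (x_1)_j\varphi_j$ with a fresh Urysohn function supported on a neighborhood of $\bigcup_j K_j$ where the two are close, whereas you use the $\varphi_j$ themselves as cutoffs after harmlessly shrinking their supports so that $\osc(f_0,\supp\varphi_j)<\varepsilon_1$.
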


\begin{proof}
Given $0 < \varepsilon < 1$, we choose $0 < \delta < \frac{\varepsilon}{4}$ such that  $0 <
\gamma (\delta ) < \frac{\varepsilon}{4}$, where $\gamma (\delta)$ satisfies the statement of
Lemma~\ref{teo-Kim-UC}. We also consider $\alpha$ such that $0 <   \alpha <  \min\bigl\{ \delta,
\frac{\gamma (\delta)}{2}\bigr\}$ and $\eta(\eps):=\alpha>0$.

Fix $0<\eps<1$, $T\in S_{\mathcal{K}(C_0(L), Y)}$ and $f_0 \in S_{C_0(L)}$ with
$\|Tf_0\|>1-\eta(\eps)=1-\alpha$. Applying Proposition~\ref{jowopro}, we get a positive regular
Borel measure $\mu$ on $L$, a non-negative integer $m$, pairwise disjoint compact subsets $K_j$
of $L$ and $\varphi _j\in C_0(L)$ ($1 \le j \le m$) such that
\begin{enumerate}
\item $\osc (f_0, K_j) < \alpha$,
\item For every $ 1 \le j \le m$, $ 0 \le \varphi _j \le 1$ and $\varphi _{j}\equiv 1$ on $K_j$,
\item $\supp \varphi_i \cap \supp \varphi_j = \emptyset$ for $i \ne j$,
\item $\|T-TP\|<\alpha$,
\end{enumerate}
where $P\in \mathcal{L}(C_0(L))$ is given by
$$
P(f):= \sum _{j=1}^m \frac{1}{\mu (K_j )}  \Bigl( \int_{K_j} f \ d \mu \Bigr) \varphi_j \qquad
\bigl(f\in C_0(L)\bigr),
$$
and it is a norm-one projection onto the linear span of $\{\varphi_1, \ldots,\varphi_m\}$.

Now, if $t \in K_j$ for some $j=1,\ldots, m$, we obtain that
\begin{align*}
\bigl\vert [P(f_0)] (t) - f_0(t) \bigr\vert &= \left\vert \frac{1}{\mu (K_j) } \int_{K_j}  \bigl(
f_0 (s) - f_0(t)\bigr)  \ d \mu(s)  \right\vert
\\
&\leq \frac{1}{\mu (K_j) }\int_{K_j} \bigl\vert f_0 (s) - f_0(t) \bigr\vert \ d \mu (s)  \leq
\alpha.
\end{align*}
Hence
\begin{equation}\label{P-f0-f0}
 \max \left\{  \bigl\vert [P(f_0) - f_0](t) \bigr\vert \,:\, t \in  \bigcup\nolimits_{j=1}^m K_j
 \right\}\leq \alpha.
\end{equation}
We also have that
\begin{equation}\label{TP-f0-grande}
\Vert TP(f_0) \Vert \ge \Vert T(f_0) \Vert - \Vert T - T P \Vert > 1 - 2\alpha  >
1 - \gamma (\delta ),
\end{equation}
and this implies that
$$
1 - 2 \alpha \le \Vert TP \Vert \le 1 \quad \text{and} \quad  1 - 2 \alpha \le  \Vert
P (f_0) \Vert.
$$
Since the functions $\{ \varphi _j \,:\, 1 \le j \le m\}$ have pairwise disjoint
support, the linear operator $\Phi: \mathrm{lin}\{\varphi_1, \ldots,   \varphi_m\}
\longrightarrow \ell_{\infty}^m$ satisfying $\Phi(\varphi_j) =
e_j$ for every $j=1,\ldots, m$ is an onto linear isometry (where
$\{e_1,\ldots,e_m\}$ is the natural basis of $\ell_\infty^m$). We define $U_1 := T
\circ \Phi ^{-1}:\ell _{\infty}^m \longrightarrow Y$ and observe that, clearly,
$\|U_1\|\leq 1$. On the other hand, the element $x^0:= \Phi (P(f_0))\in
B_{\ell_\infty^m}$ satisfies that $U_1 (x^0)=  TP (f_0)$ so, in view of
\eqref{TP-f0-grande},
$$
\left\Vert U_1 (x^0 ) \right\Vert > 1 - \gamma
(\delta ) \quad \text{and so} \quad \Vert U_1 \Vert, \Vert x^0 \Vert > 1 -
\gamma (\delta ) > 0.
$$
We consider $U= \frac{U_1}{\Vert U_1 \Vert }$ and apply Lemma~\ref{teo-Kim-UC} to the pair
$\Bigl( U, \frac{x^0}{\Vert x^0 \Vert } \Bigr)$, to get an operator $V: \ell_{\infty }^m
\longrightarrow Y$ with $\Vert V \Vert =1$ and $x^1 \in S_{ \ell_{\infty}^m}$ with
\begin{equation}\label{V-menos-U}
\Vert V - U \Vert < \delta, \quad \left\Vert x^1 - \frac{x^0}{\Vert x^0 \Vert}\right \Vert <
\delta, \quad \text{and} \quad \Vert V \bigl( x^1 \bigr)  \Vert =1 \ .
\end{equation}
We clearly have that
\begin{equation}
\label{U-menos-U1}
\Vert U - U_1 \Vert = \left\Vert \frac{U_1} {\Vert U_1 \Vert } - U_1 \right\Vert= \bigl|1 - \Vert
U_1 \Vert\bigr| \leq \gamma (\delta )
\end{equation}
and, also,
\begin{equation}\label{x1-menos-x0}
\left\Vert x^1 - x^0  \right\Vert \leq  \left  \Vert  x^1 - \frac{x^0} {\Vert x^0 \Vert }
\right\Vert + \left\Vert  \frac{x^0} {\Vert x^0 \Vert }  - x^0 \right \Vert \leq
\delta + 1 - \Vert x^0 \Vert \le \delta + \gamma (\delta ).
\end{equation}
As a consequence,
$$
\Vert V - U_1 \Vert \le  \Vert V - U \Vert  + \Vert U - U_1 \Vert   \le \delta +
\gamma (\delta ).
$$
Finally, we define the operator $S: C_0(L) \longrightarrow Y$ given by $ S(f):= V
(\Phi (P(f)))$ for every $f \in C_0(L)$, which is clearly a compact operator and
satisfies $\|S\|\leq 1$. Consider the element $f_1 = \sum _{j=1}
^m x^1 (j)
\varphi _j\in B_{C_0(L)}$. It is clear that $P(f_1)= f_1$, $\Phi (f_1)= x^1$ and
that
$$
\Vert S \Vert \ge \Vert S ( f_1) \Vert = \Vert V \Phi P (f_1 ) \Vert =
 \bigl \Vert V \bigl(x^1 \bigr) \bigr\Vert =1\ .
$$
We deduce that $f_1\in S_{C_0(L)}$ and that  $\Vert S
\Vert = \Vert S (f_1 ) \Vert =1$, i.e.\ $S$ attains its norm on $f_1$.

Next, we estimate the distance between $S$ and $T$ as follows
\begin{align*}
\Vert S - T \Vert &= \Vert V \Phi P - T \Vert \le \Vert V \Phi P - TP  \Vert +  \Vert TP - T \Vert \\
& \le \Vert V \Phi P -  U \Phi P   \Vert +   \Vert U \Phi P - TP  \Vert + \alpha\\
& \le\Vert V - U \Vert +  \Vert U \Phi P - T \Phi ^{-1} \Phi P  \Vert  + \alpha \\
& <\delta  + \Vert U - T \Phi ^{-1} \Vert + \alpha  \qquad \text{(by \eqref{V-menos-U})}\\
& \le 2\delta  + \Vert U - U_1 \Vert \le   2 \delta + \gamma (\delta )<
\eps \qquad \text{(by \eqref{U-menos-U1})}.
\end{align*}
On the other hand,
\begin{align*}
\max  \left\{  \bigl\vert [f_1 - f_0] (t) \vert \,:\, t \in \bigcup\nolimits_{ j=1}^m K_j  \right\}
&=\max _{ 1 \le j \le m}   \max_{t
\in K_j} \, \bigl\vert x^1(j) - f_0 (t) \bigr\vert \\
&\le\max _{ 1 \le j \le m}     \left\{ \vert x^1(j) - x^0(j) \vert   +  \max_{t \in K_j}
 \bigl\vert x^0(j) - f_0 (t) \bigr\vert \right\} \\
&\le\Vert x^1 - x^0 \Vert \,  + \, \max _{ 1 \le j \le m}  \max_{t \in K_j} \bigl\vert
[P(f_0)
- f_0] (t) \bigr\vert\\
&\le \delta + \gamma (\delta ) + \alpha
 <  2 \delta + \gamma (\delta )
\qquad \text{(by \eqref{x1-menos-x0} and \eqref{P-f0-f0})}.
\end{align*}
Hence, there exists an open set $O \subset L$ such that
\begin{equation}\label{meseta}
\bigcup\nolimits_{j=1} ^m K_j \subset O, \quad  \bigl\vert [f_1-f_0](t) \bigr\vert   < 3 \delta +
\gamma (\delta ) \qquad \bigl(t \in O \bigr).
\end{equation}
By Urysohn Lemma again, there is $\psi \in C_0(L)$ such that $ 0 \le \psi \le 1 $,
$\psi \equiv 1$ on $\bigcup_{k=1}^m K_j$ and $\supp \psi \subset O$. We write
$g_0:= \psi f_1 + (1-\psi) f_0\in B_{C_0(L)}$ and we claim that $S$ attains its
norm at $g_0$ and that $\|f-g_0\|<\eps$, which finishes the proof. Indeed, on  one
hand, it is clear that the restriction of $g_0$ to $\bigcup_{k=1}^m K_j$ coincides
with $f_1$. It follows that $P(g_0)= P(f_1)$ and so $S(g_0)=S(f_1)$ and $S$ attains
its norm at $g_0$. On the other hand, for $t \in L\setminus O$ we have
$g(t)=f_0(t)$. If, otherwise, $t\in O$, condition \eqref{meseta} gives that
\begin{equation*}
\bigl\vert g_0 (t) - f_0 (t) \bigr\vert = \left\vert \psi(t) \bigl( f_1(t)  - f_0(t)\bigr)  \right \vert <
3 \delta + \gamma (\delta )<\eps.\qedhere
\end{equation*}
\end{proof}

\section{Compact operators into a predual of an $L_1(\mu)$-space}
\label{sec:intoPredual_of_L_1}
Our goal is to show that the space of compact operators from an arbitrary Banach
space into an isometric predual of an $L_1$-space has the
Bishop-Phelps-Bollob\'{a}s property in both the real and the complex case.

We need a preliminary result which follows easily from the Bishop-Phelps-Bollob\'{a}s theorem. It
is also a very particular case of \cite[Theorem~2.2]{AAGM}.

\begin{lemma}\label{lemma-ellinfty-m-B+}
For every $0<\eps<1$, there is $0<\eta'(\eps)<1$ such that for every positive
integer $n$ and every Banach space $X$, the pair $(X,\ell_\infty^n)$ has the BPBp
for operators with this function $\eta'(\eps)$. More concretely, given an operator
$U\in S_{\mathcal{L}(X,\ell_\infty^n)}$ and an element  $x_0\in S_X$ such that
$\|U(x_0)\|>1-\eta'(\eps)$, there exist $V\in S_{\mathcal{L}(X,\ell_{n}^\infty)}$
and $z_0 \in S_X$ satisfying
$$
\Vert V z_0 \Vert =1 , \qquad \Vert z_0 - x_0 \Vert  < \varepsilon \quad \text{and} \quad \Vert V
- U \Vert  < \varepsilon .
$$
\end{lemma}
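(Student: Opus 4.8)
The plan is to reduce the statement to the scalar Bishop--Phelps--Bollob\'{a}s Theorem applied to a single, carefully chosen coordinate functional. First I would identify an operator $U\in\mathcal{L}(X,\ell_\infty^n)$ with the tuple $(x_1^*,\dots,x_n^*)$ of its coordinate functionals, so that $Ux=(x_1^*(x),\dots,x_n^*(x))$ and $\|U\|=\max_{1\le i\le n}\|x_i^*\|$; in particular $\|U\|=1$ forces $\|x_i^*\|\le 1$ for every $i$. Given $x_0\in S_X$ with $\|Ux_0\|>1-\eta'(\eps)$, pick an index $i_0$ with $|x_{i_0}^*(x_0)|>1-\eta'(\eps)$; then automatically $1-\eta'(\eps)<\|x_{i_0}^*\|\le 1$, so passing to $y^*:=x_{i_0}^*/\|x_{i_0}^*\|\in S_{X^*}$ changes little, and after multiplying $y^*$ by a suitable modulus-one scalar $\theta$ (with $\theta=\pm1$ in the real case) one gets $|1-\theta y^*(x_0)|<\eta'(\eps)$.

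Next I would fix $\varepsilon_1:=\eps/4$ and declare $\eta'(\eps):=\eps^2/64$, chosen so that $\eta'(\eps)<\varepsilon_1^2/2$, and apply the Bishop--Phelps--Bollob\'{a}s Theorem to the pair $(x_0,\theta y^*)$. This produces $z_0\in S_X$ and $z^*\in S_{X^*}$ with $z^*(z_0)=1$, $\|z_0-x_0\|<\varepsilon_1+\varepsilon_1^2<\eps$ and $\|z^*-\theta y^*\|<\varepsilon_1$. I would then define $V\in\mathcal{L}(X,\ell_\infty^n)$ by keeping all coordinate functionals of $U$ except the $i_0$-th one, which I replace by $\bar\theta z^*$. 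Since $\|\bar\theta z^*\|=1$ and the remaining coordinate functionals have norm at most $1$, we get $\|V\|\le 1$; moreover the $i_0$-th coordinate of $Vz_0$ equals $\bar\theta z^*(z_0)=\bar\theta$, of modulus $1$, so $\|Vz_0\|=1$, hence $\|V\|=1$ and $V$ attains its norm at $z_0$. Because only the $i_0$-th coordinate changed, using $\bar\theta\theta=1$ one computes
\[
\|V-U\|=\|\bar\theta z^*-x_{i_0}^*\|\le\|\bar\theta z^*-y^*\|+\|y^*-x_{i_0}^*\|=\|z^*-\theta y^*\|+\bigl(1-\|x_{i_0}^*\|\bigr)<\varepsilon_1+\eta'(\eps)<\eps ,
\]
and together with $\|z_0-x_0\|<\eps$ this gives exactly the conclusion of the lemma.

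I do not expect a genuine obstacle here: the argument is the coordinatewise reduction plus one invocation of the Bishop--Phelps--Bollob\'{a}s Theorem. The only points requiring a little care are (a) verifying that the numerical constraints $\eta'(\eps)<\varepsilon_1^2/2$, $\varepsilon_1+\varepsilon_1^2<\eps$ and $\varepsilon_1+\eta'(\eps)<\eps$ hold simultaneously for $0<\eps<1$ with the above choices, and --- crucially --- that these constants are absolute, so the resulting $\eta'$ is independent of $n$ and of $X$ as required; and (b) tracking the rotation $\theta$ correctly through the construction, so that the new coordinate functional $\bar\theta z^*$ simultaneously has norm one, makes $Vz_0$ have a unimodular $i_0$-th coordinate, and stays within $\eps$ of $x_{i_0}^*$.
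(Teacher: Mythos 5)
Your proof is correct, and it is essentially the argument the paper has in mind: the paper gives no proof of this lemma, merely noting that it ``follows easily from the Bishop--Phelps--Bollob\'{a}s theorem'' and is a special case of \cite[Theorem~2.2]{AAGM}, whose proof for spaces with property $\beta$ (of which $\ell_\infty^n$ is the extreme case) is exactly your coordinatewise reduction: pick the coordinate functional where the norm is almost attained, normalize and rotate it, apply the scalar BPB theorem, and swap it back in. Your constants ($\varepsilon_1=\eps/4$, $\eta'(\eps)=\eps^2/64$) are absolute and all the required inequalities check out, so the independence from $n$ and $X$ holds as claimed.
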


\begin{theorem}
\label{BPB-compact-preduals-L1}
For every $ 0< \varepsilon < 1$ there is $\eta(\eps) > 0$ such that if $X$ is any Banach space,
$Y$ is a predual of an $L_1$-space, $T \in S_{\mathcal{K}(X,Y)}$ and $x_0 \in S_{X}$ satisfy
$\Vert T x_0 \Vert > 1 - \eta(\eps)$, then there exist $S\in S_{\mathcal{F}(X,Y)}$ and $z_0 \in
S_X$ with
$$
\Vert S z_0 \Vert =1 , \qquad \Vert z_0 - x_0 \Vert  < \varepsilon \quad \text{and}
\quad \Vert S - T \Vert  < \varepsilon.
$$
\end{theorem}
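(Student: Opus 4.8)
The plan is to reduce the problem, just as in the proof of Theorem~\ref{BPB-comp-C(K)-UC}, to a finite-dimensional approximation in which the target space becomes some $\ell_\infty^n$ and then invoke Lemma~\ref{lemma-ellinfty-m-B+}. The structural fact about a predual $Y$ of an $L_1$-space that makes this possible is that such a $Y$ is an $L_1$-predual (Lindenstrauss space), so its finite-dimensional subspaces are, up to arbitrarily small perturbation, contained in subspaces isometric (or almost isometric) to $\ell_\infty^n$; moreover, one can arrange a norm-one (or almost norm-one) projection onto such a subspace. Concretely, given $\eps$, I would first fix a small $\delta>0$ governing all the perturbations (with $\delta$ chosen small relative to $\eps$ and relative to $\eta'(\delta)$ from Lemma~\ref{lemma-ellinfty-m-B+}), set $\eta(\eps)$ to be a suitable small multiple of that, and then proceed as follows.

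\textbf{Step 1: finite-rank reduction.} Since $T$ is compact, $T(B_X)$ is relatively compact, so $T(X)$ is separable and its closure lies in a separable subspace $Z_0$ of $Y$. Using compactness of $T$, approximate $T$ in operator norm by a finite-rank operator, or more precisely produce a finite-dimensional subspace $E\subset Y$ such that $\dist(Tx,E)$ is uniformly small on $B_X$. The key point is to then use the $L_1$-predual structure of $Y$: there is a finite-dimensional subspace $F\supset E$ of $Y$, a norm-one projection (or $(1+\delta)$-bounded projection) $Q:Y\to F$, and an isomorphism $\Phi:F\to\ell_\infty^n$ with $\|\Phi\|\,\|\Phi^{-1}\|<1+\delta$. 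Replacing $T$ by $QT$ costs less than $\delta$ in norm and less than $\delta$ in $\|QTx_0\|$, and $\Phi Q T:X\to\ell_\infty^n$ is an operator whose norm is within $\delta$ of $1$ and which nearly attains its norm at $x_0$.

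\textbf{Step 2: apply the $\ell_\infty^n$ result.} Normalize $U_1:=\Phi Q T$ to $U:=U_1/\|U_1\|\in S_{\mathcal{L}(X,\ell_\infty^n)}$; since $\|U_1 x_0\|$ is close to $1$ we have $\|U x_0\|>1-\eta'(\delta)$, so Lemma~\ref{lemma-ellinfty-m-B+} yields $V\in S_{\mathcal{L}(X,\ell_\infty^n)}$ and $z_0\in S_X$ with $\|V z_0\|=1$, $\|z_0-x_0\|<\delta$ and $\|V-U\|<\delta$. Pull back: set $S_1:=\Phi^{-1}V:X\to F\subset Y$. Then $S_1$ is finite-rank, $\|S_1 z_0\|$ is within $\delta$ of $1$ (because $\Phi^{-1}$ is an almost-isometry), and $\|S_1-T\|$ is small (combining $\|V-U\|<\delta$, $\|U-U_1\|\le\eta'(\delta)$ or the bound on $\|U_1\|-1$, $\|\Phi^{-1}\|<1+\delta$, and $\|QT-T\|<\delta$). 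Finally, normalize $S:=S_1/\|S_1\|$; the estimates $|1-\|S_1\||<\eps/2$ and $\|S_1-T\|<\eps/2$ give $\|S-T\|<\eps$, and $S$ attains its norm exactly at $z_0$ after a final adjustment, so $S\in S_{\mathcal{F}(X,Y)}$, $z_0\in S_X$, $\|S z_0\|=1$, $\|z_0-x_0\|<\eps$, $\|S-T\|<\eps$, as required.

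\textbf{Main obstacle.} The delicate point is Step~1: producing, inside an arbitrary $L_1$-predual, a finite-dimensional subspace that both contains a good approximation to the range of $T$ and is the image of an (almost) norm-one projection and is (almost) isometric to $\ell_\infty^n$. This is where the Lindenstrauss space theory (or the relevant lemma from \cite{JoWo} on preduals of $L_1$, which already handled the density statement) must be invoked carefully, keeping track of the interplay of the three small parameters so that $z_0$, the norm attainment, and the two distance estimates all come out below $\eps$; the rest is the same perturbation bookkeeping as in the proof of Theorem~\ref{BPB-comp-C(K)-UC}.
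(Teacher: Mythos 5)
Your strategy is the same as the paper's: approximate the compact operator's range by a finite-dimensional piece, use the $L_1$-predual structure to land in an $\ell_\infty^n$-like subspace that is the range of a norm-one projection, apply Lemma~\ref{lemma-ellinfty-m-B+}, and pull back. The bookkeeping in your Step~2 is exactly the paper's.

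The one point you leave open is the one that actually matters, and the hedged version of it ("almost isometric'' subspace, ``almost norm-one'' projection, ``final adjustment'') does not close. If $\Phi:F\to\ell_\infty^n$ is only a $(1+\delta)$-isomorphism, then $S_1=\Phi^{-1}V$ satisfies only $\|S_1z_0\|\ge(1+\delta)^{-1}\|S_1\|$, i.e.\ $S_1$ \emph{almost} attains its norm at $z_0$; there is no general ``final adjustment'' converting almost norm attainment into exact norm attainment while staying close to $S_1$ and to $z_0$ --- that is precisely the Bishop--Phelps--Bollob\'as problem you are trying to solve, and it fails for general range spaces. So you must use the exact isometric form of the structural result: by Lazar--Lindenstrauss \cite[Theorem~3.1]{LaLi} in the real case and Nielsen--Olsen \cite[Theorem~1.3]{NieOls} in the complex case, any finite subset of a predual of $L_1$ (take a $\delta$-net of $T(B_X)$) lies within $\delta$ of a subspace $E$ \emph{isometric} to $\ell_\infty^m$. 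Once $E$ is exactly isometric to $\ell_\infty^m$, the norm-one projection $P:Y\to E$ comes for free from the $1$-injectivity of $\ell_\infty^m$ (no almost-projection needed), one checks $\|PT-T\|<4\delta$ directly from the net, and the operator produced by Lemma~\ref{lemma-ellinfty-m-B+}, transported back through the isometry, attains its norm at $z_0$ exactly with no further adjustment. With that substitution your argument is the paper's proof.
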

\begin{proof}
For any $ 0< \varepsilon < 1$  we take $\eta( \varepsilon)= \min\{
\frac{\varepsilon}{4}, \eta'(\varepsilon/2)\}$, where $\eta'$ is the function
provided by the previous lemma.

Fix $0<\varepsilon <1$, $T\in S_{\mathcal{K}(X,Y)}$ and $x_0 \in S_{X}$ satisfying
$\Vert T x_0 \Vert > 1 - \eta( \varepsilon)$. Let us choose  a positive number
$\delta$ with $\delta  <  \frac{1}{4} \min \bigl \{ \frac{\varepsilon}{4}, \Vert T
(x_0) \Vert -1 + \eta  ^\prime \bigl( \frac{\varepsilon}{2} \bigr) \bigr\}$ and let
$\{ y_1, \ldots, y_n\}$ be a $\delta$-net of $T(B_X)$.  In view of \cite[Theorem
3.1]{LaLi} and \cite[Theorem 1.3]{NieOls}, there is a subspace $E\subset Y$
isometric to $\ell_\infty^m$ for some natural number $m$ and such that $\dist (y_i,
E) < \delta$ for every $i \le n$. Let  $P: Y \longrightarrow Y$ be a norm one
projection onto $E$. We will check that $\Vert PT- T \Vert < 4\delta$.  In order to
show that we fix any element $x \in B_X$ and so $\Vert Tx - y_i \Vert < \delta$ for
some $i \le n$. Let $e \in E$ be any element satisfying $\Vert e-y_i \Vert <
\delta$. Then we have
$$
\Vert  T(x)-PT(x)\Vert \le  \Vert  T(x)-y_i\Vert + \Vert  y_i - e\Vert +\Vert e-
PT(x)\Vert
$$
$$
\le  2 \delta + \Vert P(e) -PT(x)\Vert  \le 2 \delta + \Vert e-T(x) \Vert
$$
$$
\le  2 \delta + \Vert e  -y_i \Vert + \Vert y_i -T(x)\Vert  < 4 \delta.
$$
So $\Vert PT \Vert > \Vert T \Vert - 4 \delta = 1 -4 \delta > 0$. As a consequence
we also obtain that
$$
\Vert PT(x_0) \Vert > \Vert T(x_0) \Vert - 4 \delta  > 1 - \eta^\prime \bigl(
\frac{\varepsilon}{2} \bigr)\ .
$$
Hence the operator $R= \frac{PT}{\Vert PT \Vert }$ satisfies $ \Vert R(x_0) \Vert >
1 - \eta^\prime  \bigl( \frac{\varepsilon}{2} \bigr) $. Since $E$ is isometric to
$\ell_{\infty}^m$,  by Lemma~\ref{lemma-ellinfty-m-B+} there exist an operator $S
\in \mathcal{L}(X,E)\subset \mathcal{L}(X,Y)$ with $\|S\|=1$ and $z_0 \in S_X$
satisfying that
$$
\Vert S -  R \Vert  < \frac{\varepsilon}{2}, \qquad \Vert z_0 - x_0 \Vert <
\frac{\varepsilon}{2}, \quad \text{and} \quad \Vert S z_0 \Vert =1.
$$
Finally, we have that
\begin{align*}
\Vert S - T \Vert &\le \Vert S -  R \Vert + \Vert  R - PT
 \Vert  + \Vert PT - T \Vert  \\
&< \frac{\varepsilon}{2} +  1 - \Vert PT \Vert   + 4\delta  \\
& < \frac{\varepsilon}{2} +   8 \delta <  \varepsilon .\qedhere
\end{align*}

\end{proof}


\bibliographystyle{amsalpha}

\end{document}